\documentclass[11pt,reqno,final]{amsart}
\usepackage{color}
\usepackage[colorlinks=true,allcolors=blue,backref=page]{hyperref}
\usepackage{amsmath, amssymb, amsthm}
\usepackage{mathrsfs}
\usepackage{mathtools}
\usepackage[noabbrev,capitalize,nameinlink]{cleveref}
\crefname{equation}{}{}
\usepackage{fullpage}
\usepackage[noadjust]{cite}
\usepackage{graphics}
\usepackage{pifont}
\usepackage{tikz}
\usepackage{bbm}
\usepackage[T1]{fontenc}

\usetikzlibrary{arrows.meta}

\usepackage{environ}
\usepackage{framed}
\usepackage{url}
\usepackage[linesnumbered,ruled,vlined]{algorithm2e}
\usepackage[noend]{algpseudocode}
\usepackage[labelfont=bf]{caption}
\usepackage{cite}
\usepackage{framed}
\usepackage[framemethod=tikz]{mdframed}
\usepackage{appendix}
\usepackage{graphicx}
\usepackage[textsize=tiny]{todonotes}
\usepackage{tcolorbox}
\usepackage{enumerate}
\allowdisplaybreaks[4]
%\raggedbottom
\usepackage{enumerate}
\usepackage{stmaryrd}

\usepackage[shortlabels]{enumitem}
\crefformat{enumi}{#2#1#3}
\crefrangeformat{enumi}{#3#1#4 to~#5#2#6}
\crefmultiformat{enumi}{#2#1#3}
{ and~#2#1#3}{, #2#1#3}{ and~#2#1#3}

\DeclareSymbolFont{symbolsC}{U}{pxsyc}{m}{n}
\SetSymbolFont{symbolsC}{bold}{U}{pxsyc}{bx}{n}
\DeclareFontSubstitution{U}{pxsyc}{m}{n}
\DeclareMathSymbol{\medcircle}{\mathbin}{symbolsC}{7}

\usepackage{thmtools}
\usepackage{thm-restate}

\crefname{algocf}{Algorithm}{Algorithms}

\crefname{equation}{}{} %remove ``Equation''
 %Oxford comma
\AtBeginEnvironment{appendices}{\crefalias{section}{appendix}} %appendices

\usepackage[color,final]{showkeys} %add in 'final' into parameter to remove showkeys

% showkeys font
\colorlet{refkey}{orange!20}
\colorlet{labelkey}{blue!30}

\crefname{algocf}{Algorithm}{Algorithms}

% ------   Theorem Styles -------
\numberwithin{equation}{section}
\newtheorem{theorem}{Theorem}[section]
\newtheorem{proposition}[theorem]{Proposition}
\newtheorem{lemma}[theorem]{Lemma}

\crefname{claim}{Claim}{Claims}

\newtheorem{corollary}[theorem]{Corollary}

\newtheorem*{question*}{Question}

\theoremstyle{definition}
\newtheorem{definition}[theorem]{Definition}

\newtheorem*{definition*}{Definition}

\theoremstyle{remark}
\newtheorem{remark}[theorem]{Remark}
\newtheorem*{remark*}{Remark}

% ----- Delimiters ----

\newcommand{\mb}{\mathbb}
\newcommand{\mbf}{\mathbf}
\newcommand{\mbm}{\mathbbm}
\newcommand{\mc}{\mathcal}

\newcommand{\mr}{\mathrm}

\newcommand{\on}{\operatorname}

\newcommand{\eps}{\varepsilon}
\renewcommand{\tilde}{\widetilde}

\allowdisplaybreaks

\title{Subgraph distributions in dense random regular graphs}

\author[A1]{Ashwin Sah}
\author[A2]{Mehtaab Sawhney}
\address{Department of Mathematics, Massachusetts Institute of Technology, Cambridge, MA 02139, USA}
\email{\{asah,msawhney\}@mit.edu}

\thanks{Sah and Sawhney were supported by NSF Graduate Research Fellowship Program DGE-2141064. Sah was supported by the PD Soros Fellowship.}

\begin{document}

\maketitle
\begin{abstract}
Given connected graph $H$ which is not a star, we show that the number of copies of $H$ in a dense uniformly random regular graph is asymptotically Gaussian, which was not known even for $H$ being a triangle. This addresses a question of McKay from the 2010 International Congress of Mathematicians. In fact, we prove that the behavior of the variance of the number of copies of $H$ depends in a delicate manner on the occurrence and number of cycles of $3,4,5$ edges as well as paths of $3$ edges in $H$. More generally, we provide control of the asymptotic distribution of certain statistics of bounded degree which are invariant under vertex permutations, including moments of the spectrum of a random regular graph.

Our techniques are based on combining complex-analytic methods due to McKay and Wormald used to enumerate regular graphs with the notion of graph factors developed by Janson in the context of studying subgraph counts in $\mathbb{G}(n,p)$.
\end{abstract}

\section{Introduction}\label{sec:introduction}
The study of the asymptotic distribution of small subgraph counts in the Erd\H{o}s-R\'enyi random graphs $\mb{G}(n,p)$ and $\mb{G}(n,m)$ has been a topic of central interest in random graph theory. In particular, following a long series of papers, Ruci\'{n}ski \cite{Ru88} established the optimal conditions under which $X_H$, the number of unlabelled copies of $H$ in $\mb{G}(n,p)$, satisfies a central limit theorem. Furthermore, in general the distribution of small subgraphs in $\mb{G}(n,p)$ is known to a substantial degree of precision. We in particular refer the reader to the book \cite{JLR00} and references therein for a more complete account.

With regards to asymptotic distributions, the state of affairs for random $d$-regular graphs is substantially less satisfactory. Let $\mb{G}(n,d)$ denote a uniformly random $d$-regular graph. Note that unlike $\mb{G}(n,p)$ or $\mb{G}(n,m)$, the edges in $\mb{G}(n,d)$ exhibit strong and non-obvious correlations and therefore even the question of determining the number of $d$-regular graphs has a rich history drawing on techniques ranging from switchings developed by McKay \cite{McK85} (and refined by McKay and Wormald \cite{MW91}), a complex-analytic technique of McKay and Wormald \cite{MW90}, and recent breakthroughs using fixed-point iteration due to Liebenau and Wormald \cite{LW17}. We refer the reader to the excellent survey of Wormald \cite{Wor18} where the extensive history of this problem and various related enumeration problems are discussed.

McKay \cite{McK10} in his 2010 ICM survey on graphs with a fixed degree sequence asked for an understanding of the asymptotic distribution of subgraph counts in dense random regular graphs, noting that ``there is almost nothing known about the distribution of subgraph counts'' for these models; the state of affairs has remained unchanged since. In particular, the only result which applies in this regime is work of McKay \cite{McK11} which computes the expectation of the number of subgraphs of a fixed size in $\mb{G}(n,d)$ (see \cite{IM18} for an extension to more exotic degree sequences). Our main result establishes a central limit theorem for counting copies of connected graphs $H$ in $\mb{G}(n,d)$ for $\min(d,n-d)\ge n/\log n$, and a consequence of our methods demonstrates a joint central limit theorem for so-called ``graph factors'' in the sense of Janson \cite{Jan94}. We additionally apply our techniques to show an analogous result for moments of the spectrum.

Though such a result for dense graphs has until now been out of reach, there is a rich literature regarding sparser graphs. A variety of results have been proven based on applications of the moment method and taking sufficiently fast growing moments. When $d$ is constant, the cycle count distribution was shown to asymptotically converge to a Poisson distribution independently by Bollob\'{a}s \cite{Bol80} and Wormald \cite{Wor81}. This result was extended to strictly balanced graphs near the threshold for existence by Kim, Sudakov, and Vu \cite{KSV07}, establishing a Poisson limit theorem in general. For results regarding asymptotic normality, McKay, Wormald, and Wysocka \cite{MWW04} proved asymptotic normality of cycle counts for $d$ tending to infinity sufficiently slowly, in particular proving normality of triangle counts when $d = o(n^{1/5})$. A later result of Gao and Wormald \cite{GW08} improved this result for a variety of subgraph structures $H$ by counting isolated copies, including extending the regime for triangle counts to $d = o(n^{2/7})$. This was further improved by Gao \cite{Gao20} who improved the range of normality for triangle counts to $d = O(n^{1/2})$. Finally, we note that the study of the asymptotic distribution of the number of spanning structures in random regular graphs has also been of interest (see \cite{Gao20} for further discussion).

Before stating our results let us formally define a random graph with a specified degree sequence.

\begin{definition}\label{def:random-reg}
Given nonnegative sequence $\mbf{d}=(d_1,\ldots,d_n)$, let $\mb{G}(\mbf{d})$ be a uniformly random simple graph $G$ with degree sequence $\mbf{d}$. When $2|dn$ let $\mb{G}(n,d)$ be a uniformly random simple graph on $n$ vertices which is $d$-regular, and let $G(n,d)$ be the set of possible outcomes. Given $G\sim\mb{G}(n,d)$ we define its density $p=p(G)=e(G)/\binom{v(G)}{2}=d/(n-1)$.
\end{definition}

Our results provide a complete understanding of the small subgraph distribution for dense random regular graphs. We first state a corollary of our main result regarding the distribution of subgraph statistics in $\mb{G}(n,d)$. Note first that the number of stars with $s\ge 2$ leaves in a $d$-regular graph on $n$ vertices is trivially always $n\binom{d}{s}$, so we exclude this case from consideration. Additionally, given graphs $H$ and $F$ let $N(H,F)$ be the number of unlabelled copies of $F$ in $H$ (or more precisely the number of distinct, not necessarily induced, subgraphs of $H$ which are isomorphic to $F$). In particular for this definition we have that $N(C_5, P_5) = 5$, where we write $C_k$ and $P_k$ for a cycle and path respectively on $k$ vertices.

\begin{theorem}\label{cor:deduction-1}
Fix a nonempty connected graph $H$ which is not a star and let $X_H$ denote the number of unlabelled copies of $H$ in $G\sim\mb{G}(n,d)$. If $n/\log n\le\min(d,n-d)$, $2|dn$, and $G\sim\mb{G}(n,d)$ we have:
\begin{itemize}
    \item If $H$ contains a $C_3$ then 
    \[\bigg(\frac{X_H-\mb{E}X_H}{\sqrt{\mr{Var}[X_H]}}\bigg)\xrightarrow[]{d.}\mc{N}(0,1)\]
    with $\mr{Var}[X_H] = 6N(H,C_3)^2p^{2e(H)-3}(1-p)^3\frac{n^{2v(H)-3}}{\mr{aut}(H)^2} + O(n^{2v(H)-3-1/6})$.
    \item If $H$ contains a $C_4$ and no $C_3$ then 
    \[\bigg(\frac{X_H-\mb{E}X_H}{\sqrt{\mr{Var}[X_H]}}\bigg)\xrightarrow[]{d.}\mc{N}(0,1)\]
    with $\mr{Var}[X_H] = 8N(H,C_4)^2p^{2e(H)-4}(1-p)^4\frac{n^{2v(H)-4}}{\mr{aut}(H)^2}+ O(n^{2v(H)-4-1/6})$.
    \item If $H$ does not contain a $C_3$ or $C_4$ then it contains a $P_4$ and
    \[\bigg(\frac{X_H-\mb{E}[X_H]}{\sqrt{\mr{Var}[X_H]}}\bigg)\xrightarrow[]{d.}\mc{N}(0,1)\]
    with $\mr{Var}[X_H] = (10p^{2e(H)-5}(1-p)^5N(H,C_5)^2 + 6p^{2e(H)-3}(1-p)^3N(H,P_4)^2)\frac{n^{2v(H)-5}}{\mr{aut}(H)^2} +  O(n^{2v(H)-5-1/6})$.
\end{itemize}
\end{theorem}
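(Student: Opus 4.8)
\emph{Strategy.} The plan is to execute the two-part program advertised above: an algebraic reduction of $X_H$, via Janson's graph-factor calculus, to a linear combination of ``graph factors'' attached to subgraphs of $H$, followed by a complex-analytic analysis, in the style of McKay and Wormald's graph enumeration, of the handful of those factors that are not trivialized by the conditioning onto $\mb{G}(n,d)$. The upshot will be that $X_H-\mb{E}X_H$ is, up to lower-order corrections, a fixed linear combination of the graph factors of $C_3$, $C_4$, $C_5$ and $P_4$; that this combination is asymptotically Gaussian; and that its variance is exactly the quantity in the three cases of the statement, the trichotomy being dictated by which of $C_3,C_4$ occurs in $H$ (and, failing both, the coexistence of the $C_5$- and $P_4$-contributions).

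\emph{Graph factors and their collapse under regularity.} Put $p=d/(n-1)$ and, for a pair $e$ of vertices, $\eta_e=\mbm{1}[e\in G]-p$; for a graph $F$ write $\mb{G}[F]=\sum_{\phi}\prod_{uv\in E(F)}\eta_{\phi(u)\phi(v)}$, summing over injections $\phi\colon V(F)\hookrightarrow[n]$. Expanding $\mbm{1}[e\in G]=p+\eta_e$ over the edges of each labelled copy of $H$ and collecting isomorphism types yields the algebraic identity
\[
 X_H-\mb{E}_{\mb{G}(n,p)}X_H \;=\; \sum_{\substack{F\subseteq H\\ e(F)\ge1}} a_F\,\mb{G}[F],\qquad
 a_F \;=\; \frac{p^{\,e(H)-e(F)}\,N(H,F)}{\mr{aut}(H)}\cdot\frac{(n-v(F))!}{(n-v(H))!},
\]
valid on every graph, hence conditionally on $\mb{G}(n,d)$ (re-centring to $\mb{E}_{\mb{G}(n,d)}X_H$ only alters the $\mb{G}[F]$ by additive constants). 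Now in $\mb{G}(n,d)$ every vertex has degree exactly $d$, so every polynomial in the centred degrees $\sum_u\eta_{uv}$ vanishes identically, and symmetric sums $\sum_e\eta_e^j$ are constants. A case analysis over small $F$ then shows: $\mb{G}[K_2]=2e(G)-2p\binom n2=0$ identically; $\mb{G}[P_3],\mb{G}[K_{1,s}],\mb{G}[mK_2]$ and the like are deterministic and hence contribute nothing to the fluctuations; $\mb{G}[P_4]=-\mb{G}[C_3]+(\text{deterministic})$ by the walk identity expressing the number of directed $P_4$'s as $d^3n-2d^2n+dn-6N(C_3)$; and, modulo deterministic quantities, \emph{every} factor $\mb{G}[F]$ of bounded size reduces to a fixed linear combination of $\mb{G}[C_3],\mb{G}[C_4],\mb{G}[C_5]$ --- the cycles $C_3,C_4,C_5$ being the only ``irreducible'' small structures that survive the constraint. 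Tracking which $F$ contribute at the top orders $n^{2v(H)-3},n^{2v(H)-4},n^{2v(H)-5}$: if $H\supseteq C_3$ the term $a_{C_3}\mb{G}[C_3]$ dominates; if $H\not\supseteq C_3$ but $H\supseteq C_4$, then $a_{C_4}\mb{G}[C_4]$ dominates (the $\mb{G}[C_3]$-content, arising only through the $P_4$-subgraphs of $H$, is down by a factor $\asymp np$); and if $H$ contains neither $C_3$ nor $C_4$ --- in which case $H$, being connected and not a star, necessarily contains $P_4$ --- the surviving content is $a_{C_5}\mb{G}[C_5]-a_{P_4}\mb{G}[C_3]$, the two summands being of the same order in standard deviation.

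\emph{Central limit theorem and variance.} What remains is to prove that in $\mb{G}(n,d)$, under $\min(d,n-d)\ge n/\log n$, the triple $(\mb{G}[C_3],\mb{G}[C_4],\mb{G}[C_5])$ is asymptotically jointly Gaussian with asymptotically diagonal covariance and
\[
 \mr{Var}_{\mb{G}(n,d)}\big(\mb{G}[C_k]\big)\;=\;|\mr{Aut}(C_k)|\,(p(1-p))^k\,n^k\,(1+o(1))\;=\;2k\,(p(1-p))^k\,n^k\,(1+o(1)),
\]
which (with $2k=6,8,10$ for $k=3,4,5$) will furnish exactly the prefactors in the statement. This is the one place the complex-analytic method enters: encoding $\mbm{1}[\deg v=d]=\oint x_v^{\deg v-d}\,dx_v/(2\pi\mr{i}x_v)$ at each vertex, one writes $\mb{E}_{\mb{G}(n,d)}[e^{\mr{i}(s_3\mb{G}[C_3]+s_4\mb{G}[C_4]+s_5\mb{G}[C_5])}]$ as a ratio of two $n$-dimensional contour integrals built from the $\mb{G}(n,p)$ generating function $\prod_{u<v}(1+x_ux_v)$ decorated by this bounded-degree, permutation-invariant weight, and carries out a saddle-point expansion about the real diagonal saddle, showing that the weight perturbs the saddle negligibly and that the characteristic function tends to $\exp(-\tfrac12\sum_k s_k^2\sigma_k^2)$. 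Feeding this and the coefficients $a_{C_3},a_{C_4},a_{C_5},a_{P_4}$ into $\mr{Var}[X_H]=\sum_{F,F'}a_Fa_{F'}\,\mb{E}_{\mb{G}(n,d)}[\mb{G}[F]\mb{G}[F']]$ reproduces $6N(H,C_3)^2p^{2e(H)-3}(1-p)^3n^{2v(H)-3}/\mr{aut}(H)^2$ in the first case, its $C_4$-analogue in the second, and in the third the sum $\big(10N(H,C_5)^2p^{2e(H)-5}(1-p)^5+6N(H,P_4)^2p^{2e(H)-3}(1-p)^3\big)n^{2v(H)-5}/\mr{aut}(H)^2$, the two pieces adding in quadrature since $\mb{G}[C_3]$ and $\mb{G}[C_5]$ are asymptotically uncorrelated; combined with the previous step this gives $(X_H-\mb{E}X_H)/\sqrt{\mr{Var}[X_H]}\xrightarrow[]{d.}\mc{N}(0,1)$. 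The $O(n^{2v(H)-\cdot-1/6})$ error is the relative $O(n^{-1/6})$ precision of the saddle-point estimate for $\mr{Var}_{\mb{G}(n,d)}(\mb{G}[C_k])$; the remaining error sources --- sub-leading factors, $(n-v(F))!/(n-v(H))!$ versus $n^{v(H)-v(F)}$, and cross terms --- are relatively $O(1/n)$ or $O(n^{-1/2})$ and so are absorbed.

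\emph{Main obstacle.} The serious difficulty is the central limit theorem for the cycle factors in $\mb{G}(n,d)$. Over $\mb{G}(n,p)$ the asymptotic normality of $X_H$ is essentially free, being inherited from the degree-one part $\mb{G}[K_2]$, a linear form in independent indicators; but that linear part is precisely what the regularity constraint annihilates, so here one must establish Gaussianity of a genuine degree-$k$ polynomial chaos such as $\sum_{i<j<k}\eta_{ij}\eta_{jk}\eta_{ki}$ --- whose dependency graph is far too dense for the standard dependency-graph or Stein-method central limit theorems --- conditioned on all vertex degrees being pinned to $d$. Making the McKay--Wormald saddle-point analysis go through in the presence of this extra statistic, with the uniformity needed to reach $d$ as small as $n/\log n$, is where the bulk of the work lies; by comparison the collapse of the graph factors and the final variance bookkeeping are routine.
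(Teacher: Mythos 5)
Your high-level architecture matches the paper's, and your bookkeeping is correct: the expansion of $X_H$ into graph factors, the collapse $\mb{G}[P_4]=-\mb{G}[C_3]+(\text{deterministic})$, the identification of the surviving $C_3$/$C_4$/$C_5$-plus-$P_4$ contributions, and the constants $2k(p(1-p))^kn^k$ all agree with the paper. But the step you yourself flag as the crux --- the joint CLT for the cycle factors --- is proposed via a route that does not work as described. You want to write $\mb{E}_{\mb{G}(n,d)}\big[e^{\mr{i}\sum_k s_k\mb{G}[C_k]}\big]$ as a contour integral in the vertex variables with the weight ``decorating'' $\prod_{u<v}(1+x_ux_v)$. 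The McKay--Wormald representation rests on the fact that $\sum_{G}w(G)\prod_{uv\in G}x_ux_v$ factors over edges when $w$ is a product of per-edge weights; this is exactly why one can represent $\mb{E}\chi_S^{}$ for a \emph{fixed} edge set $S$ (replace $1+x_jx_k$ by $(-p+(1-p)x_jx_k)/\sqrt{p(1-p)}$ on $S$). The weight $e^{\mr{i}s\mb{G}[C_3]}$ is not of this form, and $\sum_G e^{\mr{i}sN_{C_3}(G)}\prod_{uv\in G}x_ux_v$ admits no tractable closed form, so there is no integrand on which to run a saddle-point analysis. The paper instead proves the CLT by the method of moments: each joint moment $\mb{E}\prod_i\gamma_{H_i}^{\ell_i}$ is a finite sum of $\mb{E}\chi_S^{}$ over fixed edge sets $S$, each of which does have a Cauchy-integral representation, and a combinatorial overlay lemma (counting singleton edges) isolates which $S$ contribute at leading order --- with even cycles producing the nonzero means. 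It also sidesteps a fresh saddle-point analysis by dividing by the known asymptotics of $|G(n,d)|$.

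A second, smaller gap: your claim that every bounded-size factor $\mb{G}[F]$ reduces, modulo deterministic quantities, to a fixed linear combination of $\mb{G}[C_3],\mb{G}[C_4],\mb{G}[C_5]$ is false --- $\gamma_{C_6}$ or $\gamma_{K_4}$, say, are not such combinations. What is actually needed, and what the paper proves, is (i) a reduction of factors with a degree-$1$ vertex to factors on fewer vertices (using $\sum_{v\neq u}\chi_{(u,v)}^{}=0$), and (ii) the uniform bound $\mr{Var}(\gamma_F)=O(n^{v(F)})$, improved to $O(n^{v(F)-2/3})$ when $F$ has a degree-$1$ vertex, so that every discarded term contributes $o$ of the main variance; this matters, e.g., for $4$-vertex subgraphs with pendant vertices in the $C_4$ case, whose naive contribution is of the same order as the main term. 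Point (ii) is not free: it again rests on the correlation estimates $|\mb{E}\chi_S^{}|\lesssim n^{-|S|/2+1/4}$ for fixed $S$, i.e., the same contour-integral machinery. So even the part of your argument you call routine secretly leans on the fixed-$S$ expectation estimates rather than on a characteristic-function computation.
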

\begin{remark}
We note that $\mb{E}X_H = (1+o(1))n^{v(H)}p^{e(H)}/\on{aut(H)}$ is known due to \cite{McK11} and in fact our method can be used to compute the expectation to accuracy $o(\sqrt{\mr{Var}[X_H]})$, but the resulting expressions are rather involved. Additionally, the above result implies that for $H$ being a triangle we have that the variance of $X_H$ is on the order of $p^3n^3$ for $1/\log n\le p\le 1/2$, whereas in $\mb{G}(n,p)$ the variance is of order $\max(p^3n^3,p^5n^4)$. Therefore for the range of $p$ under consideration $\mr{Var}[X_H]$ is substantially lower than in $\mb{G}(n,p)$; this is unlike the results of McKay, Wormald, and Wysocka \cite{MWW04} and Gao and Wormald \cite{GW08} when $p$ is sufficiently sparse. We also note that for $H$ not containing a triangle, $\mr{Var}[X_H]$ is in fact asymptotically smaller than the corresponding variance in $\mb{G}(n,m)$. Finally we note that the subgraph counts are not in general asymptotically independent.
\end{remark}

In general our results are sufficiently powerful to deduce the asymptotic distribution of statistics of fixed degree which is invariant under vertex permutation. To state our main result we will need the notion of graph factors as defined by Janson \cite{Jan94}. Let $x_e$ be the indicator random variable for whether an edge $e$ is in included in random graph $G\sim\mb{G}(n,d)$ and let $\chi_e^{} = (x_e-p)/\sqrt{p(1-p)}$. Note that by symmetry, marginally each $x_e$ is distributed as $\mr{Ber}(p)$ and thus $\chi_e^{}$ has mean $0$ and variance $1$. However, as $G$ is a random regular graph there are substantial correlations between different edges $\chi_e^{}$.

\begin{definition}\label{def:graph-factor}
Fix a graph $H$ with no isolated vertices and an integer $n\ge|v(H)|$. Then define
\[\gamma_H(\mbf{x}) = \sum_{\substack{E\subseteq K_n\\E\simeq H}}\prod_{e\in E}\chi_e^{}.\]
Here $\simeq$ denotes graph isomorphism, specifically between $H$ and the graph spanned by the edges $E$. We will frequently adopt the shorthand that $\chi_S^{} = \prod_{e\in S}\chi_e^{}$. We call $\gamma_H(\mbf{x})$ the \emph{graph factor} corresponding to the graph $H$. When $H$ is connected and its minimum degree is at least $2$, define the \emph{normalized graph factor} to be
\[\tilde{\gamma}_H(G) =  (\gamma_H(\mbf{x})-E_{H})/\sigma_{H}\]
where $\sigma_H =\big(\frac{n^{v(H)}}{\mr{aut}(H)}\big)^{1/2}$ and $E_H = 0$ if $H$ is not an even cycle and $E_H = \frac{2n^{v(H)/2}}{\mr{aut}(H)}=\frac{n^{v(H)/2}}{v(H)}$ if it is.
\end{definition}
\begin{remark}
In the original notion \cite{Jan94}, all connected graphs $H$ are needed to express symmetric functions of graphs. However, as we will see in \cref{sec:theory}, $d$-regularity means that $\gamma_H$ for $H$ with a degree $1$ vertex can be expressed as a linear combination of the smaller $\gamma_{H'}$ (in terms of $d$). Additionally, the $E_H$ (approximate expectation) term for even cycles makes an appearance due to regularity. This is another departure from $\mb{G}(n,p)$ behavior, since in the independent setting the expectation of every $\gamma_H(\mbf{x})$ is $0$.
\end{remark}

We now are in position to state our main result.

\begin{theorem}\label{thm:main}
Fix a collection of nonisomorphic connected graphs $\mc{H} = \{H_i\colon 1\le i\le k\}$ each of minimum degree at least $2$. Let $n/\log n\le\min(d,n-d)$, $2|dn$, and $G\sim\mb{G}(n,d)$. Then as $n\to\infty$ (uniformly in $d$), we have
\[\big(\tilde{\gamma}_{H_i}(G)\big)_{1\le i\le k}\xrightarrow[]{d.}\mc{N}(0,1)^{\otimes k}.\]
Furthermore, $\mr{Var}[\gamma_H(G)] = (1+O(n^{-1/6}))\sigma_H^2$ and $\mb{E}\gamma_H(G) = E_H + O(n^{-1/6}\sigma_H)$.
\end{theorem}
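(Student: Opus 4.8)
The plan is to prove the joint limit law and the two moment estimates together, by the method of moments. Since $\mc{N}(0,1)^{\otimes k}$ is determined by its moments, it suffices to show that for every $(a_i)_{1\le i\le k}\in\mb{Z}_{\ge 0}^k$,
\[
\mb{E}\Big[\prod_{i=1}^k\tilde\gamma_{H_i}(G)^{a_i}\Big]\longrightarrow\prod_{i=1}^k\mb{E}\big[Z^{a_i}\big],\qquad Z\sim\mc{N}(0,1),
\]
the right-hand side being $\prod_i(a_i-1)!!$ when all $a_i$ are even and $0$ otherwise. Expanding $\tilde\gamma_{H_i}=(\gamma_{H_i}-E_{H_i})/\sigma_{H_i}$, writing each $\gamma_{H_i}$ as a sum over copies $E\simeq H_i$ in $K_n$, and multiplying everything out, the left side becomes a normalized sum — over tuples of copies containing $a_i$ copies of each $H_i$, plus the lower-order terms generated by the subtracted $E_{H_i}$ — of expectations $\mb{E}\big[\prod_j\chi_{S_j}\big]$. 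Since $x_e\in\{0,1\}$ we have the pointwise identity $\chi_e^2 = 1+\frac{1-2p}{\sqrt{p(1-p)}}\chi_e$, so a product of $\chi$'s over a multiset of edges collapses to a $p$-dependent linear combination of the $\chi_T$ with $T$ a subgraph of $\bigcup_jS_j$. Everything thus reduces to (i) good control of $\mb{E}_{\mb{G}(n,d)}[\chi_T]$ for fixed graphs $T$, and (ii) a classification of which tuples of copies contribute at top order.

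Step (i) is the analytic heart of the argument, and the step I expect to be the main obstacle. Following McKay and Wormald, one writes the number of $d$-regular graphs on $[n]$ containing a fixed edge set $T$ as a coefficient extraction from $\prod_{j<\ell}(1+z_jz_\ell)$, i.e.\ as a multivariate contour integral over a product of circles of radius $\sim\sqrt{p/(1-p)}$, and runs a saddle-point analysis; dividing by the same integral with $T$ deleted gives $\mb{P}[T\subseteq G]$, and inclusion--exclusion over subsets of $T$ gives $\mb{E}[\chi_T]$. The output needed is threefold: a crude bound $|\mb{E}[\chi_T]|\le n^{-1+o(1)}$ for every $T$ without isolated vertices, where the $o(1)$ absorbs the unavoidable factors $(p(1-p))^{-O(1)}$ — this is exactly where the hypothesis $p=d/(n-1)\ge 1/\log n$ is used; a sharper bound $|\mb{E}[\chi_T]|=o\big((n^{v(T)}/\mr{aut}(T))^{-1/2}\big)$ whenever $T$ has minimum degree at least $2$ and is not a disjoint union of even cycles; and the precise leading term of $\mb{E}[\chi_T]$ when $T$ \emph{is} such a union, which is what pins down the correction $E_H$ (appearing, correspondingly, only for even cycles). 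The delicate point throughout is uniformity over the range $n/\log n\le\min(d,n-d)$: by complementation, under which $\chi_e\mapsto-\chi_e$ and $\gamma_H\mapsto(-1)^{e(H)}\gamma_H$, one may assume $p\le 1/2$, and the saddle-point estimates are most delicate at the sparse end $p\asymp 1/\log n$, near the edge of validity of the dense-enumeration method; the error there is what produces the $O(n^{-1/6})$ in the statement.

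Granting (i), step (ii) proceeds by grouping the tuples of copies by their overlap pattern — which copies share which vertices. The dominant patterns are those in which the $a_i$ copies of each $H_i$ pair off into coincident pairs; since the $H_i$ are pairwise nonisomorphic, a copy of $H_i$ never coincides with a copy of $H_j$ for $i\ne j$, which is ultimately what forces the independence of the limits. Such patterns exist only when every $a_i$ is even, there are $\prod_i(a_i-1)!!$ of them, and — once the centering by $E_{H_i}$ has cancelled the competing patterns in which some copies merely contribute their mean (nonzero only when $H_i$ is an even cycle, but then of exactly the same order as a coincident pair) — they contribute $\prod_i(a_i-1)!!$ up to the variance error, each coincident pair of copies of $H_i$ producing $\sum_S\mb{E}[\chi_S^2]=(1+o(1))n^{v(H_i)}/\mr{aut}(H_i)=(1+o(1))\sigma_{H_i}^2$. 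Every other pattern is of strictly smaller order: a pattern with a nontrivial partial overlap has fewer than the maximal number of free vertices, costing a power of $n$ because each $H_i$ is connected of minimum degree at least $2$; a pattern whose copies fall into edge-disjoint unconnected blocks contributes a product of factors $\mb{E}[\chi_T]$ with each such $T$ of minimum degree at least $2$, negligible by the second bound in (i) once the contributions from blocks which are themselves even cycles — present only when some $H_i$ is an even cycle, and exactly cancelled by the $-E_{H_i}$ terms — have been removed; mixed patterns combine the two estimates. The estimates $\mb{E}[\gamma_H(G)]=E_H+O(n^{-1/6}\sigma_H)$ and $\mr{Var}[\gamma_H(G)]=(1+O(n^{-1/6}))\sigma_H^2$ are the first- and second-moment instances of this same analysis; feeding them back yields the general mixed moment, hence the joint central limit theorem, with all error bounds uniform in $p\in[1/\log n,1/2]$ and therefore in $d$.
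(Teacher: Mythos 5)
Your architecture matches the paper's (method of moments, reduction to $\mb{E}[\chi_T]$ via McKay--Wormald contour integrals, classification of overlap patterns, even-cycle corrections), but the estimates you list in step (i) are too weak to close step (ii), and the combinatorial claim you use to dismiss partial overlaps is false. The estimate the argument actually needs is a bound decaying in the \emph{number of edges} of $T$ --- the paper proves $|\mb{E}[\chi_T]|\le Cn^{-|T|/2+1/4}$ for all $T$ with $|T|\le\sqrt{\log n}$ --- not a uniform $n^{-1+o(1)}$. Correspondingly, your assertion that a nontrivial partial overlap is negligible because it ``has fewer than the maximal number of free vertices'' is incorrect: the relevant benchmark is $n^{\frac12\sum_i a_iv(H_i)}$ (the product of the $\sigma_{H_i}^{a_i}$), and partial overlaps routinely have \emph{more} vertices than this exponent. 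Already for $\mr{Var}[\gamma_{C_5}]$, two copies of $C_5$ sharing one edge give an overlay on $8$ vertices against a normalization of $n^{5}$; your crude bound yields $n^{8}\cdot n^{-1+o(1)}=n^{7+o(1)}\gg n^{5}$, and vertex counting gives nothing. That particular pattern happens to be rescued by your third estimate (the singleton edges form a $C_8$), but in the third moment of $\gamma_{C_7}$ the pattern consisting of two coincident copies plus a third copy sharing a single edge with them has singleton-edge graph equal to a path on $7$ vertices (degree-$1$ endpoints, so not of minimum degree $2$ and not a union of even cycles), on an overlay with $12$ vertices against a normalization of $n^{21/2}$; none of your three estimates controls it, whereas $n^{12}\cdot n^{-6/2+1/4}=n^{37/4}\ll n^{21/2}$ does.

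The missing ingredients are therefore: (a) the edge-dependent decay $|\mb{E}[\chi_T]|\lesssim n^{-|T|/2+1/4}$, which requires its own contour-integral analysis (inserting the polynomial $\prod_{(j,k)\in T}i(\theta_j+\theta_k)$ into the integrand and exploiting cancellation of odd-degree monomials) and is not a corollary of the $T$-independent saddle-point estimate; and (b) the combinatorial inequality $v(G)-\tfrac12|E_{\mr{sing}}(G)|\le\tfrac12\sum_i v(H_i)$ for a multigraph overlay $G$ of connected min-degree-$2$ graphs, together with the characterization of its equality cases (isolated single cycles, or perfectly coincident pairs), which is what converts (a) into negligibility of every non-dominant pattern with margin $n^{-1/2+1/3}$. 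This is the paper's Lemma 3.2, it is where connectivity and minimum degree at least $2$ actually enter, and your proposal never isolates it. The rest of your outline --- complementation to reduce to $p\le 1/2$, the $\chi_e^2$ reduction, the role of $E_H$ for even cycles, the pairing count $\prod_i(a_i-1)!!$ and the resulting independence --- is consistent with the paper's proof.
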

\begin{remark}
The convergence in distribution can be made quantitative in terms of Kolmogorov distance or Wasserstein distance by quantifying the convergence in moments (see e.g.~\cite[Theorem~4]{RW19}); however the associated rates are quantitatively quite poor. 
\end{remark}

Although \cref{thm:main} is stated in terms of graph factors, a straightforward computation allows one to deduce the asymptotic distribution of any symmetric statistic of the edges of bounded degree; one can think of these as the ``building blocks'' for all such statistics in $\mb{G}(n,d)$.

Our results also imply that the traces of fixed powers of $A_G$ (the adjacency matrix of $G$), or equivalently the moments of the spectrum, satisfy a joint central limit theorem. Using techniques of Sinai and Soshnikov \cite{SS98} along with a suitable modifications one could likely extend the result to prove normal fluctuations for sufficiently nice test functions (e.g.~analytic functions with suitably large radius of convergence). However, given that substantially stronger results are likely plausible using Green's function estimates established by He \cite{He22} and results connecting such estimates with functional central limit theorems (see \cite{LS20} and reference therein), we omit such an extension. Additionally, we remark that direct spectral techniques are insufficient to recover \cref{thm:main} since graph factors which do not correspond to cycles are not purely determined by the spectrum.

\begin{corollary}\label{cor:deduction-2}
Given $k\ge 3$, there exists a positive definite matrix $\Sigma_k\in\mb{R}^{(k-2)\times(k-2)}$ such that the following holds. For $G\sim\mb{G}(n,d)$ with $n/\log n\le\min(d,n-d)$ and $2|dn$, $E_\ell = \mb{E}(\on{tr}(A_G^\ell))$, and $\sigma_\ell^2=\mr{Var}[\on{tr}(A_G^\ell)]$, we have
\[\big(\sigma_{\ell}^{-1}(\on{tr}(A_G^\ell)-E_{\ell})\big)_{3\le\ell\le k}\xrightarrow[]{d.}\mc{N}(0,\Sigma_k).\]
\end{corollary}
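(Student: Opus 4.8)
The plan is to express $\on{tr}(A_G^\ell)$ in terms of the graph factors $\gamma_H$ and then invoke \cref{thm:main} together with the Cramér--Wold device. Recall that $\on{tr}(A_G^\ell) = \sum_{v_1,\ldots,v_\ell} \prod_{i} x_{v_i v_{i+1}}$ counts closed walks of length $\ell$ in $G$ (indices mod $\ell$), where $x_e = p + \sqrt{p(1-p)}\,\chi_e$. Expanding each $x_e$ in this way and grouping closed walks by the isomorphism type of the underlying (multi)graph they trace out, one obtains a finite linear combination $\on{tr}(A_G^\ell) = \sum_{H} c_{H,\ell}(n,p)\, \gamma_H(\mbf{x}) + (\text{constant})$, where $H$ ranges over connected graphs with at most $\ell$ edges and the coefficients $c_{H,\ell}$ are explicit polynomials in $n$ and $p$ coming from the number of ways a closed walk can be ``folded'' onto a given subgraph; here the constant term collects the contributions where all $\chi$-factors have been replaced by their effective values. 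First I would carry out this expansion carefully, tracking that graphs $H$ with a degree-$1$ vertex can be rewritten via the identities in \cref{sec:theory} (using $d$-regularity) in terms of smaller graph factors of minimum degree at least $2$, so that ultimately only such $H$ appear; the relevant $H$ with the largest $\sigma_H$ are the cycles $C_j$ for $j \le \ell$.

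Second, I would identify the dominant terms. Since $\sigma_H^2 = n^{v(H)}/\mr{aut}(H)$, and among connected graphs of minimum degree $\ge 2$ on a bounded number of edges the cycles maximize $v(H)$ relative to $e(H)$, the leading-order fluctuations of $\on{tr}(A_G^\ell)$ come from the graph factors $\gamma_{C_j}$ with $3 \le j \le \ell$ (the $j=1,2$ cases being degenerate/absorbed). Thus $\sigma_\ell^{-1}(\on{tr}(A_G^\ell) - E_\ell)$ is, up to an $O(n^{-1/6})$ error in $L^2$ (using the variance bound $\mr{Var}[\gamma_H] = (1+O(n^{-1/6}))\sigma_H^2$ from \cref{thm:main} and that lower-order $H$ contribute a relatively negligible variance), a fixed linear combination of the normalized cycle factors $\tilde\gamma_{C_j}(G)$, $3 \le j \le \ell$, with coefficients converging to explicit constants $a_{\ell,j}$. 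By \cref{thm:main} the vector $(\tilde\gamma_{C_j}(G))_{3 \le j \le k}$ converges to a standard Gaussian in $\mb{R}^{k-2}$, so any fixed linear image converges to a Gaussian; applying Cramér--Wold (equivalently, taking the limit of all joint moments, which are controlled by \cref{thm:main}) to the vector indexed by $3 \le \ell \le k$ yields joint asymptotic normality with some limiting covariance matrix $\Sigma_k = M M^{\mathsf T}$, where $M$ is the $(k-2)\times(k-2)$ lower-triangular matrix of the $a_{\ell,j}$.

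Third, I would verify positive-definiteness of $\Sigma_k$. The matrix $M$ is triangular with respect to the ordering by $\ell$ (the walk of length $\ell$ that wraps once around $C_\ell$ contributes a nonzero coefficient $a_{\ell,\ell}\ne 0$ to $\gamma_{C_\ell}$, while $\gamma_{C_j}$ for $j>\ell$ cannot appear), so $M$ is invertible and $\Sigma_k = MM^{\mathsf T}$ is positive definite. I should also double-check the normalization conventions for even cycles, since $\gamma_{C_{2j}}$ has the nonzero centering $E_{C_{2j}}$; this only affects the deterministic part $E_\ell$ and not the covariance.

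The main obstacle I anticipate is the bookkeeping in the first step: correctly enumerating the ways a closed walk of length $\ell$ degenerates onto a subgraph $H$, computing the coefficients $c_{H,\ell}(n,p)$ to the precision needed to see that everything except the cycle contributions is lower order, and cleanly handling the reduction of graph factors with pendant vertices via the $d$-regular identities (including controlling how the $p$-dependence interacts with $\min(d,n-d)\ge n/\log n$). Once the expansion is in hand, the probabilistic content is entirely supplied by \cref{thm:main}.
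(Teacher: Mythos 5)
Your overall architecture matches the paper's: expand the trace over closed walks into graph factors, argue that the cycle factors $\gamma_{C_r}$ (with $r\le\ell$, shorter cycles entering with coefficients of order $n^{(\ell-r)/2}$ from degenerate walks) carry the leading-order fluctuations, and then get joint normality and positive definiteness from \cref{thm:main} together with the triangularity of the coefficient matrix, the diagonal entry $a_{\ell,\ell}=2\ell\neq 0$ coming from non-degenerate walks around $C_\ell$. Two points, however, are genuine gaps rather than bookkeeping. First, expanding $x_e=p+\sqrt{p(1-p)}\chi_e$ directly inside $\on{tr}(A_G^\ell)$ produces terms in which the $\chi$-edges form an arbitrary (possibly disconnected) sub-multiset of the walk, with coefficients that are large powers of $n$; the paper avoids this entirely by passing to $M=A_G-pJ+pI$, whose off-diagonal entries are exactly $\sqrt{p(1-p)}\chi_e$, and then accounting for the difference $\on{tr}(A_G^\ell)-\on{tr}(M^\ell)$ deterministically through the eigenvalue shift and the first two (deterministic) spectral moments. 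You would need either this device or a substitute for it.

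Second, and more importantly, your claim that everything other than the cycle contributions is lower order is asserted via ``$\sigma_H^2=n^{v(H)}/\mr{aut}(H)$ and cycles maximize $v(H)$ relative to $e(H)$,'' but the objects produced by a closed walk are connected \emph{multigraphs} mixing singleton and repeated edges, and one cannot reduce them to minimum-degree-two simple graph factors before controlling their variance. The paper explicitly warns that this is where trace expansions for $\mb{G}(n,d)$ differ from the usual ones: walks in which some edge appears with multiplicity $1$ cannot be trivially discarded in the expectation computations. The paper's solution is a new graph-theoretic inequality (\cref{lem:graph-theory-2}, a variant of \cref{lem:graph-theory} allowing doubled edges alongside cycles), applied to the second moment $\mb{E}\gamma_G^2\lesssim\sum_{G'}n^{v(G')-|E_\mr{sing}(G')|/2+1/3}$ via \cref{prop:graph-factor-computation}; its equality analysis is precisely what isolates the class $\mc{C}_\ell$ of cycles with pendant trees of doubled edges as the only leading-order contributors and shows all other walk types have variance $O(n^{\ell-1/6})$. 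Your proposal contains no analogue of this lemma, and the crude bound $\mr{Var}[\gamma_H]=O(n^{v(H)})$ from \cref{lem:variance-reduction} does not by itself dispatch the multigraphs in which a cycle portion carries singleton edges while other edges are repeated. Supplying this combinatorial input is the real content of the deduction; the probabilistic part of your argument (Cram\'er--Wold, triangularity, \cref{thm:main}) is correct and identical to the paper's.
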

\begin{remark}
Note that $\on{tr}(A_G) = 0$ and $\on{tr}(A_G^2) = dn$ deterministically.
\end{remark}

\subsection{Proof techniques}\label{sub:proof-techniques}
Our proof uses techniques from the enumeration of dense graphs with a specific degree sequence given by McKay and Wormald \cite{MW90} combined with the notion of graph factors introduced by Janson \cite{Jan94}. The crucial technical point is that previous work regarding asymptotic normality relied on computing the raw moments of $X_H$ and therefore requires taking a number of moments which grows with $n$. In our approach, one instead notices that any symmetric statistic on $d$-regular graphs can be expressed in terms of simple building blocks, and we can directly prove a joint central limit theorem for this collection. In order to prove the necessary limit theorem, we only require an arbitrarily slowly growing moment of these graph factors and they are particularly well-behaved when using the complex-analytic techniques developed by McKay and Wormald \cite{MW90}. In particular, the necessary moments of graph factors can be given a natural complex-analytic expression using the multidimensional Cauchy integral formula. Then desired estimates can be computed directly. In fact, certain comparisons to $|G(n,d)|$ simplify the situation, allowing us to avoid repeating a careful saddle point analysis as in the work of McKay and Wormald \cite{MW90}. The nontrivial expectation contributions $E_H$ when $H$ is an even cycle come into play due to counting certain even-power monomials in a polynomial expansion associated to the edges of $H$.

We further believe combining the general method of considering graph factors along with recent work of Liebenau and Wormald \cite{LW17}, which enumerates graphs of degrees of intermediate sparsity, can likely be used to address asymptotic distribution for regular graphs of all sparsities, a direction we plan to pursue in future work. Finally we note that while \cref{cor:deduction-1} can handle subgraph counts of mildly growing size, the understanding of the asymptotic distributions of spanning subgraphs in dense random regular graphs is also of interest. In particular, do analogues of the asymptotic normality results of Janson \cite{Jan94b} regarding the number of perfect matchings in $\mb{G}(n,m)$ exist for $\mb{G}(n,d)$? 

\subsection{Organization}\label{sub:organization}
In \cref{sec:contour} we prove the main estimates regarding the expectation of $\chi_S^{}$ for a fixed set of edges $S$ via contour integration techniques. In \cref{sec:deduction-of-main}, we deduce \cref{thm:main} via the method of moments and a graph-theoretic argument which guarantees that the estimates in \cref{sec:contour} are of sufficient accuracy. In \cref{sec:theory} we develop the theory of graph factors in $d$-regular graphs and prove that any symmetric graph statistic of fixed degree, when evaluated on $d$-regular graphs, can be expressed as a polynomial of graph factors that are of the type described in \cref{def:graph-factor}. Finally in \cref{sec:final} we deduce \cref{cor:deduction-1,cor:deduction-2} as straightforward consequences of our main results and the proofs in \cref{sec:theory}.

\subsection{Notation}\label{sub:notation}
We use standard asymptotic notation throughout, as follows. For functions $f=f(n)$ and $g=g(n)$, we write $f=O(g)$ or $f \lesssim g$ to mean that there is a constant $C$ such that $|f(n)|\le C|g(n)|$ for sufficiently large $n$. Similarly, we write $f=\Omega(g)$ or $f \gtrsim g$ to mean that there is a constant $c>0$ such that $f(n)\ge c|g(n)|$ for sufficiently large $n$. Finally, we write $f\asymp g$ or $f=\Theta(g)$ to mean that $f\lesssim g$ and $g\lesssim f$, and we write $f=o(g)$ or $g=\omega(f)$ to mean that $f(n)/g(n)\to0$ as $n\to\infty$. We write $O_H(1)$ for some unspecified constant that can be chosen as some bounded value depending only on $H$. Additionally we set $k!! = 2^{k/2}\cdot (k/2)!$ for even integers $k\ge 0$. Finally we let $[n] = \{1,\ldots, n\}$ and $\binom{[n]}{2} = \{(i,j): 1\le i<j\le n\}$.

\subsection*{Acknowledgments}
We thank Vishesh Jain for several useful discussions which played a key role in the development of the project. We also thank Brendan McKay and Nick Wormald for helpful comments. Finally, we are very grateful to the anonymous referee for a number of useful comments, including finding errors in the initial version of the manuscript with respect to the statements of the main results, and for providing us with numerical data which helped to corroborate various claims.

\section{Cancellation Estimates Based on Contour Integrals}\label{sec:contour}
\subsection{Preliminary estimates}\label{sub:preliminary-estimates}
We first recall a number of estimates from the work of McKay and Wormald \cite{MW90}.
\begin{lemma}[{\cite[Lemma~1]{MW90}}]\label{lem:exp-estimate}
Let $0\le \lambda\le 1$ and $|x|\le \pi$. Then we have that 
\[|1+\lambda(e^{ix}-1)| = (1-2\lambda(1-\lambda)(1-\cos x))^{1/2}\le \exp\bigg(-\frac{1}{2}\lambda(1-\lambda)x^2+\frac{1}{24}\lambda(1-\lambda)x^4\bigg).\]
\end{lemma}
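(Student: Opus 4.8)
The plan is to first establish the exact identity $|1+\lambda(e^{ix}-1)|^2 = 1 - 2\lambda(1-\lambda)(1-\cos x)$ by direct expansion, and then bound the right-hand side by an exponential. For the identity, I would write $1+\lambda(e^{ix}-1) = (1-\lambda) + \lambda e^{ix} = (1-\lambda+\lambda\cos x) + i\lambda\sin x$, so that
\[
|1+\lambda(e^{ix}-1)|^2 = (1-\lambda+\lambda\cos x)^2 + \lambda^2\sin^2 x.
\]
Expanding and using $\sin^2 x + \cos^2 x = 1$ collapses this to $1 - 2\lambda(1-\lambda) + 2\lambda(1-\lambda)\cos x = 1 - 2\lambda(1-\lambda)(1-\cos x)$, which is the claimed square. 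Since $0\le\lambda\le1$ we have $\lambda(1-\lambda)\ge 0$ and $1-\cos x\le 2$, so the quantity in the square root lies in $[0,1]$ and the square root is well-defined and real.

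For the exponential bound, since $1-u \le e^{-u}$ for all real $u$, it suffices to control the argument after taking the square root. Write $t = \lambda(1-\lambda)(1-\cos x)\ge 0$; then $|1+\lambda(e^{ix}-1)| = (1-2t)^{1/2} \le \exp(-t)$ using $(1-2t)^{1/2}\le e^{-t}$ (equivalently $1-2t\le e^{-2t}$, which is the standard inequality $1-s\le e^{-s}$ with $s=2t$; this needs $2t\le 1$, which holds since $2\lambda(1-\lambda)\le 1/2$ and $1-\cos x\le 2$). Thus
\[
|1+\lambda(e^{ix}-1)| \le \exp\big(-\lambda(1-\lambda)(1-\cos x)\big).
\]
To finish, I would lower-bound $1-\cos x$ by a polynomial in $x$ on $|x|\le\pi$: the Taylor-type inequality $1-\cos x \ge \tfrac12 x^2 - \tfrac1{24}x^4$ holds for all real $x$ (the difference $1-\cos x - \tfrac12 x^2 + \tfrac1{24}x^4 = \sum_{k\ge 3}(-1)^k x^{2k}/(2k)!$ has a dominant positive leading term and one checks nonnegativity, most cleanly by noting $g(x):=1-\cos x-\tfrac12x^2+\tfrac1{24}x^4$ satisfies $g(0)=g'(0)=g''(0)=g'''(0)=0$ and $g^{(4)}(x)=\cos x - 1 + \tfrac{x^2}{2}\ge 0$... wait, that's $1-\cos x\le x^2/2$, so $g^{(4)}\ge 0$, hence $g\ge 0$ by repeated integration from $0$). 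Plugging this in gives $-\lambda(1-\lambda)(1-\cos x) \le -\tfrac12\lambda(1-\lambda)x^2 + \tfrac1{24}\lambda(1-\lambda)x^4$, which is exactly the exponent in the statement.

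The only mild subtlety — hardly an obstacle — is making sure the polynomial lower bound $1-\cos x\ge \tfrac12 x^2 - \tfrac1{24}x^4$ is applied with the correct sign: since $\lambda(1-\lambda)\ge 0$, multiplying by $-\lambda(1-\lambda)$ reverses the inequality in the right direction, and the positive $x^4$ term in the final exponential is what one expects (it is an error term, not a genuine growth term, since on $|x|\le\pi$ one still has net decay for the regime of $\lambda$ of interest). Everything else is elementary real analysis; there is no hard step, and the computation is essentially the one carried out by McKay and Wormald.
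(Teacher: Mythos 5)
Your proof is correct; the paper does not prove this lemma itself but cites it directly from McKay--Wormald, and your argument (expand $|(1-\lambda)+\lambda e^{ix}|^2$ to get the exact identity, apply $1-s\le e^{-s}$, then use $1-\cos x\ge \tfrac12x^2-\tfrac1{24}x^4$) is exactly the standard elementary derivation one would give. The only blemish is a harmless mislabeling in the last step: the expression $\cos x-1+\tfrac{x^2}{2}$ is $g''(x)$, not $g^{(4)}(x)$ (which equals $1-\cos x$); since both are nonnegative and the lower-order derivatives vanish at $0$, either one yields $g\ge 0$ by integration, so the conclusion stands.
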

\begin{lemma}[{\cite[(3.3)]{MW90}}]\label{lem:quadratic-inequality}
We have for $x_j\in \mb{R}$ that
\[\sum_{1\le j<k\le\ell}(x_j+x_k)^2\ge(\ell-2)\sum_{1\le j\le\ell}x_j^2,\qquad\sum_{1\le j<k\le\ell}(x_j+x_k)^4\le 8(\ell-1)\sum_{1\le j\le\ell}x_j^4.\]
\end{lemma}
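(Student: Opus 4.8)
The plan is to prove both inequalities by reducing them to elementary symmetric-sum manipulations. For the first inequality, I would expand the left-hand side: $\sum_{1\le j<k\le\ell}(x_j+x_k)^2 = \sum_{j<k}(x_j^2 + 2x_jx_k + x_k^2)$. Each variable $x_j^2$ appears in exactly $\ell-1$ of the pairs, so the ``square'' terms contribute $(\ell-1)\sum_j x_j^2$, while the cross terms contribute $2\sum_{j<k}x_jx_k = \big(\sum_j x_j\big)^2 - \sum_j x_j^2$. Hence the left-hand side equals $(\ell-2)\sum_j x_j^2 + \big(\sum_j x_j\big)^2 \ge (\ell-2)\sum_j x_j^2$, since the squared sum is nonnegative. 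This handles the first claim cleanly.

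For the second inequality I would use the power-mean (or simply convexity of $t\mapsto t^4$) bound $(x_j+x_k)^4 \le 8(x_j^4 + x_k^4)$, valid since $(a+b)^4 \le (2\max(|a|,|b|))^4 \le 16\max(a^4,b^4)\le 16(a^4+b^4)$ — actually the sharper constant $8$ follows from $(a+b)^4 \le 2^3(a^4+b^4)$ by convexity applied to $\big(\tfrac{a+b}{2}\big)^4 \le \tfrac{a^4+b^4}{2}$. Summing over all pairs $j<k$, each term $x_j^4$ appears in exactly $\ell-1$ pairs, so $\sum_{j<k}(x_j+x_k)^4 \le 8(\ell-1)\sum_j x_j^4$, as desired.

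There is essentially no serious obstacle here: both bounds are term-by-term estimates after expansion, and the only mild subtlety is getting the constant in the fourth-power inequality to be exactly $8$ rather than $16$, which is resolved by invoking convexity of $t^4$ (equivalently Jensen's inequality for the two points $a,b$) rather than the cruder max-bound. I would state the convexity step explicitly as $\big(\tfrac{a+b}{2}\big)^4 \le \tfrac12(a^4+b^4)$ and multiply through by $16$. Since the source is cited as \cite[(3.3)]{MW90}, one could alternatively just quote it, but the self-contained two-line argument above is short enough to include directly.
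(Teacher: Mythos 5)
Your proof is correct and complete: the first inequality follows from the exact identity $\sum_{j<k}(x_j+x_k)^2=(\ell-2)\sum_j x_j^2+\big(\sum_j x_j\big)^2$, and the second from the pairwise convexity bound $(a+b)^4\le 8(a^4+b^4)$ summed over the $\ell-1$ pairs containing each index. The paper itself gives no proof, simply quoting the inequality as (3.3) of McKay--Wormald, and your two-line argument is the standard derivation of that estimate, so there is nothing further to reconcile.
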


We also require the following elementary estimate (a variant of which appears in \cite[p.~8]{MW90}); we provide a proof for the sake of completeness. 
\begin{lemma}\label{lem:gaussian-inequality}
We have for $m\ge m_{\ref{lem:gaussian-inequality}}$ that 
\[\int_{-\pi/16}^{\pi/16}\exp(-mx^2 + mx^4)dx = (1\pm 2m^{-1})\sqrt{\pi/m}.\]
\end{lemma}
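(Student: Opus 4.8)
The plan is to prove this by a standard Laplace-method comparison against the exact Gaussian integral. First I would recall that $\int_{-\infty}^{\infty} e^{-mx^2}\,dx = \sqrt{\pi/m}$, so it suffices to control (i) the tail contribution from $|x| > \pi/16$ in this full-line integral, and (ii) the effect of the $+mx^4$ perturbation on the compact interval. For the tail, on $|x| \ge \pi/16$ we have $e^{-mx^2} \le e^{-m(\pi/16)x}$ for the portion... more simply, $\int_{\pi/16}^{\infty} e^{-mx^2}\,dx \le e^{-m\pi^2/512}$ for $m$ large (crudely bounding $x^2 \ge (\pi/16)x$ on this range after one factor), which is $\ll m^{-1}\sqrt{\pi/m}$; so replacing the interval $[-\pi/16,\pi/16]$ by the whole line in the pure Gaussian costs only an error absorbed into the claimed $\pm 2m^{-1}$ factor.

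Next I would handle the quartic term. On $|x| \le \pi/16$ we have $mx^4 \le (\pi/16)^2 m x^2 \le \tfrac{1}{2} m x^2$, so $e^{-mx^2 + mx^4} \le e^{-mx^2/2}$, and also $e^{-mx^2+mx^4} \ge e^{-mx^2}$ since $x^4 \ge 0$. The lower bound together with the tail estimate already gives $\int_{-\pi/16}^{\pi/16} e^{-mx^2+mx^4}\,dx \ge (1 - O(m^{-1}))\sqrt{\pi/m}$. For the matching upper bound, write $e^{mx^4} = 1 + (e^{mx^4}-1)$ and use $e^{mx^4}-1 \le mx^4 e^{mx^4} \le mx^4 e^{mx^2/2}$ (using $mx^4 \le \tfrac12 mx^2$ again), so that $\int_{-\pi/16}^{\pi/16} e^{-mx^2}(e^{mx^4}-1)\,dx \le m\int_{-\infty}^{\infty} x^4 e^{-mx^2/2}\,dx = m \cdot \Theta(m^{-5/2}) = \Theta(m^{-3/2})$, which is $O(m^{-1})\sqrt{\pi/m}$. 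Combining with $\int_{-\pi/16}^{\pi/16} e^{-mx^2}\,dx \le \sqrt{\pi/m}$ yields the upper bound $(1 + O(m^{-1}))\sqrt{\pi/m}$.

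Finally I would check the constants to make sure the combined error is genuinely at most $2m^{-1}$ in relative terms (choosing $m_{\ref{lem:gaussian-inequality}}$ large enough to absorb all the implied constants and to justify the crude tail bound); the Gaussian moment $\int x^4 e^{-mx^2/2}\,dx = 3\sqrt{2\pi}\,(2/m)^{2}\cdot(m/2)^{-1/2}$-type evaluation is elementary. The only mild subtlety — really the only "obstacle" — is bookkeeping: making sure that each of the three sources of error (the two-sided tail, the quartic correction, and any slack in the one-sided estimates) is individually $\le \tfrac{2}{3}m^{-1}\sqrt{\pi/m}$ or so, so the stated $(1 \pm 2m^{-1})$ holds on the nose rather than merely up to an unspecified constant; this is purely a matter of being slightly careful with the numerical factor $(\pi/16)^2 < 1/2$ and choosing $m_{\ref{lem:gaussian-inequality}}$ appropriately.
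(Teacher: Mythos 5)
Your strategy---exponentially small tail contributions plus a perturbative treatment of the $e^{mx^4}$ factor reduced to a Gaussian fourth moment---is sound and is essentially the same route the paper takes, but the displayed computation in your upper bound does not close with the stated constant $2$. After relaxing $mx^4\le(\pi/16)^2mx^2$ to $mx^4\le\tfrac12 mx^2$, your bound becomes $m\int_{\mathbb{R}}x^4e^{-mx^2/2}\,dx=m\cdot\tfrac34\sqrt{\pi}\,(m/2)^{-5/2}=3\sqrt{2}\,m^{-1}\sqrt{\pi/m}\approx 4.24\,m^{-1}\sqrt{\pi/m}$, which exceeds the allowed error $2m^{-1}\sqrt{\pi/m}$; so the final sentence's hope that each error source is $\le\tfrac23 m^{-1}\sqrt{\pi/m}$ fails for this term as written. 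The fix is exactly the one you gesture at: keep the true constant $(\pi/16)^2\approx 0.039$, which gives $m\int_{\mathbb{R}}x^4e^{-(1-(\pi/16)^2)mx^2}\,dx=\tfrac34(1-(\pi/16)^2)^{-5/2}m^{-1}\sqrt{\pi/m}<m^{-1}\sqrt{\pi/m}$, and the lemma follows. The paper sidesteps this bookkeeping entirely by first truncating to $|x|\le m^{-2/5}$ (the complement of this range inside $[-\pi/16,\pi/16]$ contributes only $O(e^{-m^{1/5}})$ since there $-mx^2+mx^4\le-mx^2/2$); on the truncated range $mx^4\le m^{-3/5}=o(1)$ pointwise, so one can write $e^{mx^4}=1\pm 2mx^4$ with no exponential amplification, and the correction term is $2m\cdot\tfrac34\sqrt{\pi}\,m^{-5/2}=\tfrac32 m^{-1}\sqrt{\pi/m}$, safely below $2m^{-1}\sqrt{\pi/m}$. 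In short: right idea and correct lower bound, but one lossy inequality in the upper bound must be tightened for the constant $2$ to hold.
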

\begin{proof}
Notice that for $m$ larger than an absolute constant, 
\begin{align*}
\int_{-\pi/16}^{\pi/16}\exp(-mx^2 + mx^4)dx &= \int_{-m^{-2/5}}^{m^{-2/5}}\exp(-mx^2 + mx^4)dx \pm \pi/8\cdot\exp(-m^{1/5})\\
& = \int_{-m^{-2/5}}^{m^{-2/5}}(1\pm 2mx^4)\exp(-mx^2)dx \pm \pi/8\cdot\exp(-m^{1/5})\\
&= (1\pm 2m^{-1})\sqrt{\pi/m}. \qedhere
\end{align*}
\end{proof}

We will also use an elementary estimate bounding large moments in the following twisted Gaussian integral.
\begin{lemma}\label{lem:gaussian-inequality-2}
We have 
\[\int_{-\pi/16}^{\pi/16}|x|^{k}\exp(-mx^2 + mx^4)dx\le\sqrt{2\pi}k^{k/2}m^{-(k+1)/2}.\]
\end{lemma}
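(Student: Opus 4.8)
\textbf{Proof proposal for \cref{lem:gaussian-inequality-2}.}

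The plan is to reduce the twisted integral to a clean Gaussian moment integral by controlling the $e^{mx^4}$ factor on the relevant range, and then to bound the resulting Gaussian moment by the known exact formula. First I would observe that on the interval $|x|\le\pi/16$ we have $x^4\le (\pi/16)^2\cdot x^2 \le x^2$ (since $(\pi/16)^2<1$), so $-mx^2+mx^4\le -mx^2/2$ after absorbing: more precisely $-mx^2+mx^4 \le -(1-(\pi/16)^2)mx^2$, and since $1-(\pi/16)^2 > 1/2$ this is at most $-mx^2/2$. Hence
\[
\int_{-\pi/16}^{\pi/16}|x|^k\exp(-mx^2+mx^4)\,dx \le \int_{-\infty}^{\infty}|x|^k\exp(-mx^2/2)\,dx.
\]
The right-hand side is a standard Gaussian absolute moment: substituting $y=\sqrt{m}\,x$ gives $m^{-(k+1)/2}\int_{-\infty}^\infty |y|^k e^{-y^2/2}\,dy = m^{-(k+1)/2}\cdot 2^{k/2}\cdot\frac{2}{\sqrt{2\pi}}\Gamma\!\big(\tfrac{k+1}{2}\big)\cdot\sqrt{2\pi}$, or more usefully one can just bound $\int_{-\infty}^\infty |y|^k e^{-y^2/2}\,dy \le \sqrt{2\pi}\cdot \mb{E}[|Z|^k]$ where $Z$ is standard normal... but actually I would prefer to avoid the Gamma function entirely and instead bound things by $k^{k/2}$ directly.

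The cleaner route for the final bound: after the substitution $y = \sqrt{m}\,x$ in $\int_{-\infty}^\infty |x|^k e^{-mx^2/2}dx$, we get $m^{-(k+1)/2}\int_{-\infty}^\infty |y|^k e^{-y^2/2}dy$. Now I would use the elementary inequality $|y|^k e^{-y^2/2} \le k^{k/2} e^{-y^2/4}$, which follows because $|y|^k e^{-y^2/4} = (y^2)^{k/2}e^{-y^2/4}$ is maximized at $y^2 = k$ where it equals $k^{k/2}e^{-k/4} \le k^{k/2}$. Therefore
\[
\int_{-\infty}^\infty |y|^k e^{-y^2/2}\,dy \le k^{k/2}\int_{-\infty}^\infty e^{-y^2/4}\,dy = k^{k/2}\cdot 2\sqrt{\pi} \le k^{k/2}\cdot\sqrt{2\pi}\cdot\sqrt{2},
\]
which is a touch too large; but I can instead write $|y|^k e^{-y^2/2} \le k^{k/2} e^{-y^2/2}\cdot e^{k/2}\cdot$(something)... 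The honest fix is to split more carefully: $|y|^k e^{-y^2/2} = |y|^k e^{-y^2/4}\cdot e^{-y^2/4} \le k^{k/2}e^{-k/4}e^{-y^2/4}$, and then $\int e^{-y^2/4}dy = 2\sqrt\pi$, giving the bound $k^{k/2}e^{-k/4}\cdot 2\sqrt{\pi} \le k^{k/2}\cdot 2\sqrt\pi$; since the target allows $\sqrt{2\pi}\,k^{k/2}m^{-(k+1)/2}$ and $2\sqrt\pi = \sqrt{2\pi}\cdot\sqrt2 > \sqrt{2\pi}$, one actually needs the $e^{-k/4}$ savings: $2\sqrt\pi\, e^{-k/4} \le \sqrt{2\pi}$ holds iff $e^{-k/4}\le 1/\sqrt2$ iff $k\ge 2\ln 2 \approx 1.39$, i.e. $k\ge 2$. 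For $k\in\{0,1\}$ one checks the bound directly (for $k=0$ it is \cref{lem:gaussian-inequality} essentially; for $k=1$ a one-line computation). Assembling, $\int_{-\pi/16}^{\pi/16}|x|^k\exp(-mx^2+mx^4)dx \le m^{-(k+1)/2}\cdot 2\sqrt\pi\, e^{-k/4}\, k^{k/2} \le \sqrt{2\pi}\,k^{k/2}m^{-(k+1)/2}$.

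The main obstacle here is purely bookkeeping of constants: the factor $\sqrt{2\pi}$ in the target is tight enough that one cannot be cavalier with the $e^{-y^2/4}$ integral, so the argument must either extract the $e^{-k/4}$ gain from the maximization step (clean for $k\ge 2$) or handle the two tiny cases $k=0,1$ separately, and one must also confirm the elementary inequality $x^4\le x^2$ on $[-\pi/16,\pi/16]$ used at the start. None of this is deep; the only genuine "idea" is the pointwise bound $(y^2)^{k/2}e^{-y^2/4}\le k^{k/2}$ coming from optimizing over $y$, together with keeping enough of the surviving Gaussian mass to integrate. There is no issue with $m$ being large versus small here — the bound holds for all $m$ for which the expression makes sense (indeed all $m>0$), since every inequality used is valid pointwise in $x$.
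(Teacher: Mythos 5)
Your reduction to $\int_{-\infty}^{\infty}|x|^k e^{-mx^2/2}\,dx$ and the substitution $y=\sqrt{m}\,x$ are exactly what the paper does. But the final step contains a genuine computational error: you claim the pointwise bound $|y|^k e^{-y^2/4}\le k^{k/2}e^{-k/4}$ on the grounds that $(y^2)^{k/2}e^{-y^2/4}$ is maximized at $y^2=k$. It is not: setting $t=y^2$, the function $t^{k/2}e^{-t/4}$ has $\frac{d}{dt}\log = \frac{k}{2t}-\frac14$, so the maximum is at $t=2k$, with value $(2k)^{k/2}e^{-k/2}$. This exceeds your claimed bound by the factor $2^{k/2}e^{-k/4}=(2/\sqrt{e})^{k/2}>1$, so the inequality you rely on is simply false (e.g.\ for $k=2$, $y^2=4$: the left side is $4e^{-1}\approx 1.47$ while $k^{k/2}e^{-k/4}=2e^{-1/2}\approx 1.21$). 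Propagating the correct maximum through your argument gives the bound $2\sqrt{\pi}\,(2/e)^{k/2}k^{k/2}$, and the comparison $2\sqrt{\pi}\,(2/e)^{k/2}\le\sqrt{2\pi}$ holds only for $k\ge 3$; in particular $k=2$ would also have to be checked by hand, contrary to your claim that only $k\in\{0,1\}$ are exceptional. The argument is repairable (verify $k\le 2$ directly, use the corrected maximum for $k\ge 3$), but as written the key pointwise inequality and the resulting case analysis are wrong.

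For comparison, the paper avoids this bookkeeping entirely: after the same substitution it writes $\int_{-\infty}^{\infty}|y|^k e^{-y^2/2}\,dy=\sqrt{2\pi}\,\mb{E}_{Z\sim\mc{N}(0,1)}|Z|^k$ and invokes the standard Gaussian moment bound $\mb{E}|Z|^k\le k^{k/2}$ (for even $k$ this is $(k-1)!!\le k^{k/2}$, and the odd case follows similarly), which yields the stated constant $\sqrt{2\pi}$ uniformly in $k$ with no case splitting. If you want to keep your "optimize the integrand pointwise, integrate the leftover Gaussian" strategy, you must either accept the weaker constant or tune the split $e^{-y^2/2}=e^{-\alpha y^2}\cdot e^{-(1/2-\alpha)y^2}$ more carefully; the moment-formula route is strictly cleaner here.
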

\begin{proof}
Note
\begin{align*}
\int_{-\pi/16}^{\pi/16}|x|^{k}\exp(-mx^2 + mx^4)~dx &\le \int_{-\infty}^{\infty}|x|^{k}\exp(-mx^2/2)~dx= m^{-(k+1)/2}\int_{-\infty}^{\infty}|x|^{k}\exp(-x^2/2)~dx  \\
&= \sqrt{2\pi}m^{-(k+1)/2}\mb{E}_{Z\sim\mc{N}(0,1)}|Z|^k\le k^{k/2}\sqrt{2\pi}m^{-(k+1)/2}.\qedhere
\end{align*}
\end{proof}

We will need another polynomial inequality in the real numbers.
\begin{lemma}\label{lem:symmetric-sum-ineq}
For $x_1,\ldots,x_\ell\in\mb{R}$ we have
\[k!\sum_{1\le j_1<\cdots<j_k\le\ell}x_{j_1}^2\cdots x_{j_k}^2\le\Big(\sum_{1\le j\le\ell}x_j^2\Big)^k\le k!\sum_{1\le j_1<\cdots<j_k\le \ell }x_{j_1}^2\cdots x_{j_k}^2+\binom{k}{2}\Big(\max_{j\in[\ell]}x_j^2\Big)\Big(\sum_{1\le j\le\ell}x_j^2\Big)^{k-1}.\]
\end{lemma}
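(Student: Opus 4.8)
The plan is to expand the power $\left(\sum_{1\le j\le\ell}x_j^2\right)^k$ by the multinomial theorem and sort the resulting monomials according to whether their support (the set of indices appearing with positive exponent) has size exactly $k$ or strictly less than $k$. Writing $y_j = x_j^2\ge 0$ to lighten notation, we have
\[
\Big(\sum_{j}y_j\Big)^k = \sum_{(j_1,\dots,j_k)\in[\ell]^k} y_{j_1}\cdots y_{j_k},
\]
where the sum is over ordered tuples. The tuples all of whose entries are distinct contribute exactly $k!\sum_{1\le j_1<\cdots<j_k\le\ell} y_{j_1}\cdots y_{j_k}$, since each unordered $k$-set arises from $k!$ orderings. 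Since every $y_j\ge 0$, dropping all remaining tuples only decreases the sum, which gives the lower bound $k!\sum_{j_1<\cdots<j_k} y_{j_1}\cdots y_{j_k} \le \big(\sum_j y_j\big)^k$ immediately.

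For the upper bound I would control the ``diagonal'' contribution from tuples with a repeated index. If a tuple $(j_1,\dots,j_k)$ has some value repeated, then after choosing an ordered pair of positions $(a,b)$ with $a<b$ and $j_a = j_b$, the monomial $y_{j_1}\cdots y_{j_k}$ contains a factor $y_{j_a}^2 = y_{j_a}\cdot y_{j_a}$; bounding one of these two factors by $\max_{j} y_j$ and collapsing the remaining $k-1$ indices (which range freely over $[\ell]^{k-1}$) bounds the total over such tuples, for fixed position-pair $(a,b)$, by $\big(\max_j y_j\big)\big(\sum_j y_j\big)^{k-1}$. There are $\binom{k}{2}$ choices of the position-pair $(a,b)$, and every tuple with a repeat is counted under at least one such pair, so the sum over all tuples containing a repeated index is at most $\binom{k}{2}\big(\max_j y_j\big)\big(\sum_j y_j\big)^{k-1}$. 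Combining with the exact count of the distinct-entry tuples yields
\[
\Big(\sum_j y_j\Big)^k \le k!\sum_{1\le j_1<\cdots<j_k\le\ell} y_{j_1}\cdots y_{j_k} + \binom{k}{2}\Big(\max_{j}y_j\Big)\Big(\sum_j y_j\Big)^{k-1},
\]
which is exactly the claimed inequality after substituting back $y_j = x_j^2$.

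The only mild subtlety—and the step to state carefully rather than a genuine obstacle—is the over-counting in the upper-bound argument: a tuple with several repeats, or a value appearing three or more times, is charged to multiple position-pairs $(a,b)$, but since all terms are nonnegative this over-counting is harmless and in fact makes the bound slightly lossy (consistent with the fact that the lemma is only an inequality). I would simply note that each repeated-entry tuple is counted at least once and that all summands are nonnegative. No nonnegativity hypothesis beyond $x_j^2\ge 0$ is needed, so the lemma holds for all real $x_j$; the whole argument is a few lines of bookkeeping on the multinomial expansion.
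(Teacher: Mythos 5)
Your proof is correct and follows essentially the same route as the paper: expand the $k$-th power into ordered tuples, identify the distinct-index tuples with $k!\sum_{j_1<\cdots<j_k}$, and charge each repeated-index tuple to one of $\binom{k}{2}$ position pairs, bounding one repeated factor by $\max_j x_j^2$ and the rest by $\big(\sum_j x_j^2\big)^{k-1}$. The paper phrases the $\binom{k}{2}$ count as a map on index sequences with fibers of size at most $\binom{k}{2}$ rather than as a union bound, but the combinatorial content is identical.
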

\begin{proof}
The first inequality is trivial. For the second, consider expanding $(\sum_{j=1}^{\ell}x_j^2)^k$ and removing the terms which have no repeated index. For the remaining terms, remove the first term in the sequence that later repeats and bound it by $\max_{j\in[\ell]}x_j^2$. It is easy to check that the resulting map on index sequences has fibers of size at most $\binom{k}{2}$.
\end{proof}

Finally, we require the main result of \cite[Theorem~1]{MW90} which provides a sharp estimate for $|G(n,d)|$.
\begin{theorem}\label{thm:count}
There exists $\eps = \eps_{\ref{thm:count}}>0$ such that for $n/\log n\le\min(d,n-d)$, $2|dn$, $\lambda = d/(n-1)$, and $r = \sqrt{\lambda/(1-\lambda)}$, we have
\begin{align*}
|G(n,d)| &= 2^{1/2}(2\pi\lambda^{d+1}(1-\lambda)^{n-d}n)^{-n/2}\exp\bigg(\frac{-1+10\lambda-10\lambda^2}{12\lambda(1-\lambda)}+O(n^{-\eps})\bigg)\\
&=\frac{(1+r^2)^{\binom{n}{2}}}{(2\pi r^{d})^{n}}\bigg(\frac{2\pi}{\lambda(1-\lambda)n}\bigg)^{n/2}\bigg(2^{1/2}\exp\bigg(\frac{-1+10\lambda -10\lambda^2}{12\lambda(1-\lambda)}+O(n^{-\eps})\bigg)\bigg).
\end{align*}
\end{theorem}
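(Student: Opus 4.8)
The plan is to recall that \cref{thm:count} is a verbatim restatement of \cite[Theorem~1]{MW90}, so rather than reprove it I would reproduce, in outline, the multidimensional saddle-point argument used there. One starts from the identity $|G(n,d)| = [\,\prod_j z_j^{d}\,]\prod_{1\le i<j\le n}(1+z_iz_j)$, applies Cauchy's formula on $n$ circles of a common radius $r>0$, and substitutes $z_j = re^{i\theta_j}$ to obtain
\[
|G(n,d)| = \frac{1}{(2\pi r^{d})^{n}}\int_{[-\pi,\pi]^{n}}\ \prod_{1\le i<j\le n}\bigl(1+r^{2}e^{i(\theta_i+\theta_j)}\bigr)\ \prod_{j=1}^{n}e^{-id\theta_j}\ d\theta_1\cdots d\theta_n.
\]
Factoring $1+r^{2}e^{i(\theta_i+\theta_j)} = (1+r^{2})\bigl(1+\lambda(e^{i(\theta_i+\theta_j)}-1)\bigr)$ and choosing $r=\sqrt{\lambda/(1-\lambda)}$, so that $\lambda = r^{2}/(1+r^{2}) = d/(n-1)$, pulls out the factor $(1+r^{2})^{\binom n2}$ appearing in the theorem and, crucially, arranges that the linear-in-$\theta$ part of $\sum_{i<j}\log\bigl(1+\lambda(e^{i(\theta_i+\theta_j)}-1)\bigr)$, which equals $i\lambda(n-1)\sum_j\theta_j = id\sum_j\theta_j$, cancels against $\prod_j e^{-id\theta_j}$; thus $\theta=0$ is a genuine real saddle point of the modulus of the integrand.

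Next I would split the torus $[-\pi,\pi]^{n}$ into a small neighbourhood of the origin — say all $|\theta_j|\le n^{-1/2+\delta}$, or a comparable region — and its complement. On the complement, bound the modulus of the product using \cref{lem:exp-estimate}, $|1+\lambda(e^{ix}-1)|\le\exp\bigl(-\tfrac12\lambda(1-\lambda)x^{2}+\tfrac1{24}\lambda(1-\lambda)x^{4}\bigr)$, followed by the first inequality of \cref{lem:quadratic-inequality}, $\sum_{i<j}(\theta_i+\theta_j)^{2}\ge(n-2)\sum_j\theta_j^{2}$, to obtain Gaussian-type decay of order $\exp(-\Omega(\lambda(1-\lambda)n\,\|\theta\|^{2}))$; the one delicate subregion is where $\cos(\theta_i+\theta_j)$ is near $1$ for almost all pairs, which forces the $\theta_j$ to cluster near a common value and is precisely where the oscillation of $\prod_j e^{-id\theta_j}$ must be exploited to kill the contribution. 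The conclusion is that the complement is smaller than the main region by a factor $\exp(-\Omega(n^{\delta'}))$.

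On the main region I would substitute the Taylor expansion
\[
\sum_{1\le i<j\le n}\log\bigl(1+\lambda(e^{i(\theta_i+\theta_j)}-1)\bigr) = id\sum_j\theta_j - \tfrac12\lambda(1-\lambda)\sum_{i<j}(\theta_i+\theta_j)^{2} - \tfrac{i}{6}\lambda(1-\lambda)(1-2\lambda)\sum_{i<j}(\theta_i+\theta_j)^{3} + O\big(\lambda\textstyle\sum_{i<j}(\theta_i+\theta_j)^{4}\big) + \cdots,
\]
the first term on the right cancelling the linear phase. The quadratic term gives a Gaussian integral for the form $\tfrac12\lambda(1-\lambda)\bigl((n-2)\sum_j\theta_j^{2}+(\sum_j\theta_j)^{2}\bigr)$, whose matrix has spectrum $\lambda(1-\lambda)(n-2)$ with multiplicity $n-1$ and $2\lambda(1-\lambda)(n-1)$ with multiplicity one; the single large eigenvalue makes this integral differ from the normalization $(2\pi/(\lambda(1-\lambda)n))^{n/2}$ by an $O(1)$ factor, and this, combined with the cubic and quartic corrections, produces the prefactor $2^{1/2}$ together with the explicit constant $\exp\bigl((-1+10\lambda-10\lambda^{2})/(12\lambda(1-\lambda))\bigr)$. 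The imaginary cubic term produces no decay but, paired with itself under the Gaussian measure, together with the quartic term (estimated via the second inequality of \cref{lem:quadratic-inequality}, $\sum_{i<j}(\theta_i+\theta_j)^{4}\le 8(n-1)\sum_j\theta_j^{4}$, and Gaussian moment bounds as in \cref{lem:gaussian-inequality,lem:gaussian-inequality-2}), supplies that constant, while all terms of degree $\ge 5$ and the truncation error contribute only a relative $O(n^{-\eps})$. The second displayed form of the theorem then follows from the first via $1+r^{2}=1/(1-\lambda)$ and $r^{2}=\lambda/(1-\lambda)$.

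The main obstacle is to make both halves uniform over the entire range $n/\log n\le\min(d,n-d)$: at the sparse boundary $\lambda\asymp 1/\log n$ the effective Gaussian width $\lambda(1-\lambda)n$ is only $\Theta(n/\log n)$, so the cubic and quartic corrections are correspondingly larger and the constant $\exp\bigl((-1+10\lambda-10\lambda^{2})/(12\lambda(1-\lambda))\bigr)$ shrinks to size $n^{-\Theta(1)}$ — which is exactly why this is the limit of the method — and the tail estimate must quantitatively beat the spurious near-critical directions $\theta_j\approx c$ for all $j$, where the modulus alone is not small and the phase is essential. Both are carried out carefully in \cite{MW90} (an alternative derivation goes through the switching method of \cite{McK85,MW91}); since we use \cref{thm:count} only as a black box for normalization, I would simply cite it.
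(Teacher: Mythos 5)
The paper gives no proof of this statement at all: it is imported verbatim as the main result of \cite{MW90} (cited as Theorem~1 there), so your decision to treat it as a black box and cite it is exactly what the paper does, and your outline of the McKay--Wormald saddle-point argument is faithful to that source. One small correction to the sketch: the clustering region $\theta_j\approx\pi$ for all $j$ is not killed by the oscillation of $\prod_j e^{-id\theta_j}$ but is a second saddle contributing equally (since $2\mid dn$), and it is this factor of $2$, combined with the $2^{-1/2}$ coming from the single large eigenvalue $2\lambda(1-\lambda)(n-1)$ of the quadratic form, that produces the prefactor $2^{1/2}$.
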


\subsection{Graph factor estimates}\label{sub:graph-factor-estimates}
The crucial estimates for the remainder of the proof will be the following inequalities controlling the behavior of the constituent expectations in a graph factor.

\begin{proposition}\label{prop:graph-factor-computation}
There is $C=C_{\ref{prop:graph-factor-computation}}>0$ so that for a set of distinct edges $S\subseteq K_n$ the following holds. Let $p = d/(n-1)$, $n/\log n\le\min(d,n-d)$, and $2|dn$. Recall the notation $\chi_S^{}$ from \cref{def:graph-factor}. 
\begin{itemize}
    \item For any $S$ such that $|S|\le\sqrt{\log n}$ we have 
    \[|\mb{E}_{G\sim\mb{G}(n,d)}\chi_S^{}|\le Cn^{-|S|/2+1/4}.\]
    \item For $S$ such that $|S|\le\sqrt{\log n}$, and there is a connected component which is an odd cycle, we have 
    \[|\mb{E}_{G\sim\mb{G}(n,d)}\chi_S^{}|\le Cn^{-1/4}n^{-|S|/2}.\]
    \item For $S$ such that $|S|\le\sqrt{\log n}$, the set of edges form a set of vertex disjoint even cycles, and there are $\ell$ disjoint cycles, we have 
    \[|\mb{E}_{G\sim\mb{G}(n,d)}\chi_S^{}-2^\ell n^{-|S|/2}|\le Cn^{-1/5}n^{-|S|/2}.\]
\end{itemize}
\end{proposition}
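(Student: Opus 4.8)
The plan is to express $\mb{E}_{G\sim\mb{G}(n,d)}\chi_S^{}$ as a ratio of two multidimensional contour integrals and then estimate numerator and denominator separately using the saddle-point analysis of McKay and Wormald. First I would recall the generating-function encoding of $d$-regular graphs: if $A(\mbf{z}) = \prod_{\{j,k\}\in\binom{[n]}{2}}(1+z_jz_k)$, then $|G(n,d)|$ is the coefficient of $\prod_j z_j^d$ in $A(\mbf{z})$, which by Cauchy's formula is $(2\pi)^{-n}\oint\cdots\oint A(\mbf{z})\prod_j z_j^{-d-1}\,d\mbf{z}$ over circles $|z_j| = r$ with $r = \sqrt{\lambda/(1-\lambda)}$ chosen as the saddle point. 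Writing $z_j = re^{i\theta_j}$, the integrand becomes $\prod_{j<k}(1+r^2e^{i(\theta_j+\theta_k)})\cdot r^{-dn}e^{-id\sum\theta_j}$, and $|G(n,d)|$ equals $(1+r^2)^{\binom n2}(2\pi r^d)^{-n}$ times the normalized integral $I_0 := (2\pi)^{-n}\int_{[-\pi,\pi]^n}\prod_{j<k}\frac{1+r^2e^{i(\theta_j+\theta_k)}}{1+r^2}\cdot e^{-id\sum_j\theta_j}\,d\boldsymbol\theta$. For the numerator, since $x_e = \mbm{1}[e\in G]$ can be extracted by differentiating in the corresponding $z$-variables (equivalently, inserting the factor $\frac{z_jz_k}{1+z_jz_k}$ for each edge $e=\{j,k\}\in S$ into the integrand before taking the coefficient), we get
\[
\mb{E}\chi_S^{} = \frac{1}{(p(1-p))^{|S|/2}}\cdot\frac{(2\pi)^{-n}\int_{[-\pi,\pi]^n}\prod_{j<k}\frac{1+r^2e^{i(\theta_j+\theta_k)}}{1+r^2}\cdot e^{-id\sum_j\theta_j}\cdot\prod_{e=\{j,k\}\in S}\Big(\frac{r^2e^{i(\theta_j+\theta_k)}}{1+r^2e^{i(\theta_j+\theta_k)}}-p\Big)d\boldsymbol\theta}{I_0},
\]
where I have already subtracted $p$ edgewise to build in the centering of $\chi_e^{}$ (this is legitimate since $\mb{E}x_e = p$ exactly by symmetry). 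Denote the vertices touched by $S$ as $V(S)$, with $|V(S)|\le 2|S|$.

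The key steps, in order, would be: (1) restrict the $\boldsymbol\theta$-integral to the region $\mc R$ where every $|\theta_j|\le n^{-1/2}\log n$ (say), showing via Lemmas \ref{lem:exp-estimate} and \ref{lem:quadratic-inequality} that the contribution outside $\mc R$ is superpolynomially small relative to $I_0$ — this is exactly the outer-region bound in \cite{MW90} and is where $n/\log n\le\min(d,n-d)$ is used; (2) on $\mc R$, Taylor-expand the factors $\frac{r^2e^{i(\theta_j+\theta_k)}}{1+r^2e^{i(\theta_j+\theta_k)}}-p$ for $e = \{j,k\}\in S$. The zeroth-order term in $\theta$ vanishes identically (this is the point of subtracting $p$), so each such factor is $O(\theta_j+\theta_k) = O(|\theta_j|+|\theta_k|)$; thus the product over $S$ is $O\big(\prod_{j\in V(S)}$ of roughly $|\theta_j|^{\deg_S(j)}\big)$ up to combinatorial bookkeeping — crucially, every vertex of $V(S)$ appears with multiplicity at least its $S$-degree. (3) After this expansion, $I_0$ has the shape of a Gaussian integral in the $\theta_j$ with an effective quadratic form of "mass" $\asymp \lambda(1-\lambda)n$ in each coordinate (the cross terms between far vertices are negligible to leading order, as in the McKay–Wormald saddle analysis), so inserting a monomial $\prod_{j\in V(S)}|\theta_j|^{a_j}$ with $\sum a_j = \sum_{j\in V(S)}\deg_S(j) = 2|S|$ multiplies the value by $O\big((\lambda(1-\lambda)n)^{-|S|}\big)$ in absolute value, via Lemma \ref{lem:gaussian-inequality-2}. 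Since $\lambda(1-\lambda)\asymp p(1-p)$, the prefactor $(p(1-p))^{-|S|/2}$ combines with this to give $O(n^{-|S|/2})$ — but we need to account for the number of vertices: there are $|V(S)|$ integrated coordinates "spent", and each unspent coordinate contributes a full Gaussian factor that cancels against $I_0$. Careful counting: a connected component of $S$ with $v$ vertices and $e$ edges contributes $e$ factors of $\theta$-decay distributed over $v$ vertices, giving a net gain of $n^{-e/2}\cdot n^{v/2}/n^{v/2}$... the bookkeeping here yields the claimed $n^{-|S|/2+1/4}$, where the $+1/4$ slack is the crude loss from bounding $|\theta_j|$ moments individually rather than tracking cancellations.

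For the second bullet (odd cycle component), the improvement from $n^{-|S|/2+1/4}$ to $n^{-|S|/2-1/4}$ comes from an additional cancellation: on an odd cycle $v_1v_2\cdots v_{2\ell+1}$, the leading $\theta$-linear approximation of $\prod_i\big(\frac{r^2e^{i(\theta_{v_i}+\theta_{v_{i+1}})}}{1+r^2e^{i(\theta_{v_i}+\theta_{v_{i+1}})}}-p\big)$, when integrated against the Gaussian weight, picks out the term proportional to $\prod_i(\theta_{v_i}+\theta_{v_{i+1}})$; expanding and using that each $\theta_{v_i}$ appears in exactly two factors, the leading Gaussian expectation is a sum over pairings, and for an odd cycle there is a sign/parity obstruction (an odd cycle has no perfect matching on its edges-as-a-graph, or more precisely the relevant quadratic-form pairing forces a vanishing leading coefficient) — so the leading term of size $n^{-|S|/2+1/4}$ actually cancels, leaving the next order $n^{-|S|/2-1/4}$. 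I would make this precise by writing the single-cycle integral as $\int \prod_i(e^{i(\theta_{v_i}+\theta_{v_{i+1}})}q(\theta) - p)\,e^{-\text{(quad)}}$ and observing that the purely-linear part is an odd function under $\boldsymbol\theta\mapsto-\boldsymbol\theta$ precisely when the number of edges is odd, hence integrates to zero. For the third bullet (vertex-disjoint even cycles), the same expansion has a \emph{non}-vanishing leading term: for a single even cycle $C_{2m}$ on vertices $v_1,\ldots,v_{2m}$, the coefficient of the top monomial $\prod_i\theta_{v_i}^2$ after integrating the linear approximation against the product-Gaussian equals (up to the $(p(1-p))^{-m}$ normalization and the mass factors) exactly $2$, because there are precisely two ways to "orient" the cycle / two perfect matchings of the cycle graph realizing the pairing — this is the source of the factor $2^\ell$ for $\ell$ disjoint cycles (one factor of $2$ per cycle, and disjointness makes the contributions multiply). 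The error term $n^{-1/5-|S|/2}$ (slightly worse than $n^{-1/4}$) reflects that to see the constant $2^\ell$ one must expand to exactly the right order and control the next-order correction, using $m_{\ref{lem:gaussian-inequality}}$-type estimates and Lemma \ref{lem:symmetric-sum-ineq} to handle the error from replacing $\prod(1+r^2e^{i(\cdots)})$ by its Gaussian approximation; the $n^{-1/5}$ rather than $n^{-1/6}$ is slack we can afford.

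The main obstacle I anticipate is step (3): making rigorous the claim that inserting the $\theta$-monomial coming from the $S$-edges multiplies the normalized integral $I_0$ by the expected power of $n^{-1}$ \emph{with the correct constant} for the even-cycle case, rather than just an order-of-magnitude bound. This requires either (a) diagonalizing the effective quadratic form in the exponent near the saddle point and tracking which coordinates the $S$-monomial involves, showing the off-diagonal corrections are lower-order — essentially re-running a localized version of the McKay–Wormald saddle-point computation but now with a polynomial insertion — or (b) a cleverer route comparing directly to $|G(n,d)|$ as hinted in the introduction, writing $\mb{E}\chi_S^{}$ as a ratio in which most of the saddle-point analysis cancels and only a finite-dimensional Gaussian integral over the coordinates in $V(S)$ survives. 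Approach (b) is cleaner and is presumably what the authors intend; the delicate part there is justifying that "integrating out" the $n - |V(S)|$ untouched coordinates reproduces, to within the stated error, a clean Gaussian measure on $\mb{R}^{V(S)}$ with covariance $\approx (p(1-p)n)^{-1}$ times identity plus negligible corrections, which again leans on Lemmas \ref{lem:exp-estimate}--\ref{lem:symmetric-sum-ineq} and Theorem \ref{thm:count}.
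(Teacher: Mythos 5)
Your setup (Cauchy's formula over circles of radius $r=\sqrt{\lambda/(1-\lambda)}$, localization to $|\theta_j|\le n^{-1/2+o(1)}$, and Taylor expansion of the $S$-factors to the linear term $i(\theta_j+\theta_k)$ so that the problem becomes a polynomial insertion into the McKay--Wormald integral) is exactly the paper's route, and your Wick/pairing heuristic correctly predicts both the vanishing for odd cycles and the constant $2^\ell$ for even cycles (two perfect matchings of adjacent edge-pairs per even cycle, equivalently the $2^\ell$ monomials of the expansion of $\prod_{(j,k)\in S}(\theta_j+\theta_k)$ in which every vertex has even degree). But the two places where you wave your hands are precisely where the work is, and your proposed mechanism for the odd-cycle bullet does not close. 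You argue that ``the purely-linear part is an odd function under $\boldsymbol\theta\mapsto-\boldsymbol\theta$ \ldots hence integrates to zero.'' The weight $\prod_{j<k}(1+p(e^{i(\theta_j+\theta_k)}-1))e^{-id\sum_j\theta_j}$ is not an even real function: after the exact cancellation of the linear phase against $e^{-id\sum\theta_j}$, there remains a cubic phase of size $p\,n\sum_j|\theta_j|^3\asymp (n/p)^{1/2}$ on the relevant scale, which is huge, so parity only tells you the full integral is real, not that an odd monomial integrates to something small. The paper's actual mechanism is different and works with absolute values throughout: using permutation symmetry of the weight, a variable $\theta_k$ appearing to odd (hence first) degree is replaced by $\frac{1}{n-k+1}\sum_{j\ge k}\theta_j$, and then a Rademacher-symmetrization plus Cauchy--Schwarz bound gives $\big|\sum_{j\ge k}\theta_j\big|\le(\sum_j\theta_j^2)^{1/2}\le n^{\eps}$ inside the localized region, yielding the extra factor $n^{-1/2+2\eps}$ relative to a typical $|\theta_k|\asymp n^{-1/2}$. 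Some such argument that tolerates the oscillatory phase is needed; plain parity is not enough.

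For the even-cycle constant, your ``approach (b)'' is the right instinct but the paper does not integrate out the coordinates off $V(S)$. Instead it exploits that $\mb{E}\chi_\emptyset=1$ to cancel the normalizing constant of \cref{thm:count} (this is also the real source of the $n^{1/4}$ and $n^{1/5}$ slacks: the factor $\exp\big(\tfrac{-1+10\lambda-10\lambda^2}{12\lambda(1-\lambda)}\big)$ can be as large as $n^{1/5}$ when $\lambda\asymp1/\log n$, not the crudeness of moment bounds as you suggest), symmetrizes $\prod_{j\in[|S|/2]}\theta_j^2$ over all of $[n]$ via \cref{lem:symmetric-sum-ineq} to replace it by $\binom{n}{|S|/2}^{-1}((|S|/2)!)^{-1}(\sum_j\theta_j^2)^{|S|/2}$ up to controllable error, and then uses a chi-squared concentration bound to show $\sum_j\theta_j^2$ concentrates at $(p(1-p)n)^{-1}$ under the Gaussian-dominated weight. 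You would need to supply some equivalent of these steps; also note the small bookkeeping slip that the inserted monomial has total degree $|S|$ (one factor per edge), not $2|S|$.
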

As mentioned the initial reduction in the proof closely mimics that of the proof of \cite[Theorem~1]{MW90}.
\begin{proof}
Note that by complementing $\mb{G}(n,d)$ and replacing $p$ by $1-p$, we have that 
\[\mb{E}_{G\sim\mb{G}(n,d)}\chi_S^{} = (-1)^{|S|}\mb{E}_{G\sim \mb{G}(n,n-d-1)}\chi_S^{}.\]
Therefore it suffices to treat the case where $p\le 1/2$.

By Cauchy's integral formula and taking the contours for $z_j$ to be circles of radius $r = \sqrt{p/(1-p)}$ around the origin we have 
\begin{align}
&\mb{E}_{G\sim\mb{G}(n,d)}\chi_S^{} = \frac{(2\pi i)^{-n}}{|G(n,d)|}\oint \cdots \oint \frac{\prod_{(j,k)\notin S}(1+z_jz_k)\prod_{(j,k)\in S}(-p+(1-p)z_jz_k)/\sqrt{p(1-p)}}{\prod_{j\in[n]}z_j^{d+1}}dz\notag\\
&= \frac{(1+r^2)^{\binom{n}{2}}}{(2\pi r^{d})^{n}|G(n,d)|}\int_{-\pi}^{\pi} \cdots \int_{-\pi}^{\pi}\frac{\prod_{(j,k)\notin S}(1+p(e^{i(\theta_j+\theta_k)}-1))\prod_{(j,k)\in S}(p(1-p))^{1/2}(e^{i(\theta_j+\theta_k)}-1)}{\exp(id\sum_{j\in [n]}\theta_j)}d\theta\label{eq:cauchy-integral}
\end{align}
where $dz=\prod_{j\in[n]}dz_j$ and $d\theta=\prod_{j\in[n]}d\theta_j$, and the product is over unordered pairs $(j,k)$ which can be thought of as edges of the complete graph $K_n$.

\textbf{Step 1: Localizing $\theta$.}
As in \cite[Theorem~1]{MW90}, which corresponds to the case $S = \emptyset$, the first maneuver is to localize near the origin, and the techniques are similar. Let $t = \pi/8$ and fix $\eps$ to be a small numerical constant to be chosen later ($\eps = 10^{-10}$ will suffice). We divide indices based on where they lie on the circle: $S_1 = \{j\colon\theta_j\in [-t,t]\}$, $S_2 = \{j\colon\theta_j\in[t,\pi - t]\}$, $S_3 = \{j\colon\theta_j\in [\pi-t,\pi] \cup [-\pi,-\pi + t]\}$, and $S_4 = \{j\colon\theta_j\in [-\pi+t,- t]\}$. Let $\mbf{R}$ denote the set of $\theta$ such at least one of $|S_1||S_3|\ge n^{1+\eps}$, $|S_2|^2\ge n^{1+\eps}$, or $|S_4|^2\ge n^{1+\eps}$ holds. We have by \cref{lem:exp-estimate} that
\begin{align}
&\bigg|\int_{\mbf{R}}\frac{\prod_{(j,k)\notin S}(1+p(e^{i(\theta_j+\theta_k)}-1))\prod_{(j,k)\in S}(p(1-p))^{1/2}(e^{i(\theta_j+\theta_k)}-1)}{\exp(id\sum_{j\in [n]}\theta_j)}d\theta\bigg|\notag\\
&\le\int_{\mbf{R}}(1-2p(1-p)(1-\cos(2t)))^{n^{1+\eps}/3-|S|}d\theta\le\exp(-\Omega(n^{1+\eps/2}))\label{eq:remainder-1}
\end{align}
and therefore it will suffice to consider $\theta\notin\mbf{R}$. Thus $|S_2|,|S_4|\le n^{1/2+\eps/2}$. Furthermore note that $\theta\notin\mbf{R}$ implies that $|S_1|\le n^{\eps}$ or $|S_3|\le n^\eps$. As the integrand is invariant under $\theta\to\theta + \pi$ (since $2|dn$) it suffices to consider when $|S_3|\le n^\eps$ and multiply the resulting integral by a factor of $2$.

Let $\mbf{R}'$ denote the set of $\theta$ such that $\theta\notin\mbf{R}$, $|S_3|\le n^{\eps}$, and there is $\theta_j\notin [-n^{-1/2+\eps},n^{-1/2+\eps}]$. We have 
\begin{align}
&\bigg|\int_{\theta\in\mbf{R}'}\frac{\prod_{(j,k)\notin S}(1+p(e^{i(\theta_j+\theta_k)}-1))\prod_{(j,k)\in S}(p(1-p))^{1/2}(e^{i(\theta_j+\theta_k)}-1)}{\exp(id\sum_{j\in [n]}\theta_j)}d\theta\bigg|\notag\\
&\le\int_{\theta\in\mbf{R}'}\prod_{(j,k)\notin S}|1+p(e^{i(\theta_j+\theta_k)}-1)|d\theta\le e^{O(|S|)}\bigg(\frac{2\pi}{\lambda(1-\lambda)n}\bigg)^{n/2}\exp(-\Omega(n^\eps)),\label{eq:remainder-2}
\end{align}
where we used a slight modification of \cite[(3.4),~(3.5)]{MW90} in the second inequality (namely, the analogy to the intermediate upper bound given in \cite{MW90} is multiplicatively stable with respect to removal of the terms corresponding to $(j,k)\in S$).

Finally, let $\mbf{U}$ denote the set of $\theta$ such that $|\theta_j|\le n^{-1/2+\eps}$ for all $j$. Combining \cref{eq:cauchy-integral,eq:remainder-1,eq:remainder-2}, the above symmetry observation, and \cref{thm:count} yields
\begin{align}
&\mb{E}_{G\sim \mb{G}(n,d)}\chi_S^{}\pm\exp(-\Omega(n^\eps))\notag\\
&=\frac{2(1+r^2)^{\binom{n}{2}}}{(2\pi r^{d})^{n}|G(n,d)|}\int_{\mbf{U}}\frac{\prod_{(j,k)\notin S}(1+p(e^{i(\theta_j+\theta_k)}-1))\prod_{(j,k)\in S}(p(1-p))^{1/2}(e^{i(\theta_j+\theta_k)}-1)}{\exp(id\sum_{j\in [n]}\theta_j)}d\theta.\label{eq:U-formula}
\end{align}

\textbf{Step 2: Reducing the $S$ contribution to a polynomial.}
We next apply a Taylor series transformation in order to reduce to a more symmetric integral where the terms depending on $S$ are polynomial factors within the integrand. First notice that if $\theta_j,\theta_k$ are sufficiently small then
\[|(e^{i(\theta_j+\theta_k)}-1)(1+p(e^{i(\theta_j+\theta_k)}-1))^{-1}-i(\theta_j+\theta_k)|\le|\theta_j+\theta_k|^2.\]
Therefore for $\theta\in\mbf{U}$ we have
\[\bigg|\prod_{(j,k)\in S}(e^{i(\theta_j+\theta_k)}-1)(1+p(e^{i(\theta_j+\theta_k)}-1))^{-1} - \prod_{(j,k)\in S}i(\theta_j+\theta_k)\bigg|\le 2^{|S|} (2n^{-1/2+\eps})\prod_{(j,k)\in S}|\theta_j+\theta_k|.\]

Define
\[P_1(\theta) = \prod_{(j,k)\notin S}(1+p(e^{i(\theta_j+\theta_k)}-1))\prod_{(j,k)\in S}(e^{i(\theta_j+\theta_k)}-1)\]
and
\[P_2(\theta) = \prod_{(j,k)\in \binom{[n]}{2}}(1+p(e^{i(\theta_j+\theta_k)}-1))\prod_{(j,k)\in S}(i(\theta_j+\theta_k)).\]
Note the above analysis implies
\begin{align*}
&\bigg|\int_{\mbf{U}}\frac{P_1(\theta)-P_2(\theta)}{\exp(id\sum_{j\in [n]}\theta_j)}d\theta\bigg|\le 2^{|S|}\int_{\mbf{U}}2n^{-1/2+\eps}\prod_{(j,k)\in S}|\theta_j+\theta_k|\prod_{(j,k)\in \binom{[n]}{2}}|1+p(e^{i(\theta_j+\theta_k)}-1)|d\theta\\
&\qquad\le 8^{|S|}n^{-1/2+\eps}\int_{\mbf{U}}|\theta_1|^{|S|}\prod_{(j,k)\in \binom{[n]}{2}}|1+p(e^{i(\theta_j+\theta_k)}-1)|d\theta\\
&\qquad\le 8^{|S|}n^{-1/2+\eps}\int_{\mbf{U}}|\theta_1|^{|S|}\prod_{(j,k)\in\binom{[n]}{2}}\exp\bigg(-\frac{1}{2}p(1-p)(\theta_j+\theta_k)^2+\frac{1}{24}p(1-p)(\theta_j+\theta_k)^4\bigg)d\theta\\
&\qquad\le 8^{|S|}n^{-1/2+\eps}\int_{\mbf{U}}|\theta_1|^{|S|}\exp\bigg(\sum_{1\le j\le n}-(n-2)\frac{p(1-p)}{2}\theta_j^2+(n-1)\frac{p(1-p)}{3}\theta_j^4\bigg)d\theta\\
&\qquad\le 8^{|S|}n^{-1/2+\eps}\int_{\mbf{U}}|\theta_1|^{|S|}\exp\bigg(\sum_{1\le j\le n}-(n-2)\frac{p(1-p)}{2}\theta_j^2+(n-2)\frac{p(1-p)}{2}\theta_j^4\bigg)d\theta\\
&\qquad\lesssim 16^{|S|}n^{-1/2+\eps}n^{-|S|/2}|S|^{|S|/2}(2\pi/(p(1-p)n))^{n/2}
\end{align*}
where we have applied \cref{lem:exp-estimate,lem:quadratic-inequality,lem:gaussian-inequality,lem:gaussian-inequality-2}. By \cref{eq:U-formula} and \cref{thm:count} it follows that
\begin{equation}\label{eq:poly-formula}
\mb{E}_{G\sim \mb{G}(n,d)}\chi_S^{} = \frac{2(1+r^2)^{\binom{n}{2}}}{(2\pi r^{d})^{n}|G(n,d)|}\int_{\mbf{U}}\frac{P_2(\theta)(p(1-p))^{|S|/2}}{\exp(id\sum_{j\in [n]}\theta_j)}d\theta\pm n^{-(|S|+1)/2 + 2\eps}.
\end{equation}

\textbf{Step 3: Uniform bound on the integral.}
We now prove the first bullet point in \cref{prop:graph-factor-computation}. Note that \cref{lem:exp-estimate,lem:quadratic-inequality} gives
\begin{align*}
&\bigg|\int_{\mbf{U}}\frac{P_2(\theta)}{\exp(id\sum_{j\in [n]}\theta_j)}d\theta\bigg|\le \int_{\mbf{U}}\prod_{(j,k)\in S}|\theta_j+\theta_k|\prod_{(j,k)\in\binom{[n]}{2}}|1+p(e^{i(\theta_j+\theta_k)}-1)|d\theta\\
&\qquad\le 2^{|S|}\int_{\mbf{U}}|\theta_1|^{|S|}\prod_{(j,k)\in\binom{[n]}{2}}\exp\bigg(-\frac{1}{2}p(1-p)(\theta_j+\theta_k)^2+\frac{1}{24}p(1-p)(\theta_j+\theta_k)^4\bigg)d\theta\\
&\qquad\le 2^{|S|}\int_{\mbf{U}}|\theta_1|^{|S|}\exp\bigg(\sum_{1\le j\le n}-(n-2)\frac{p(1-p)}{2}\theta_j^2+(n-1)\frac{p(1-p)}{3}\theta_j^4\bigg)d\theta\\
&\qquad\le 2^{|S|}\int_{\mbf{U}}|\theta_1|^{|S|}\exp\bigg(\sum_{1\le j\le n}-(n-2)\frac{p(1-p)}{2}(\theta_j^2-\theta_j^4)\bigg)d\theta\\
&\qquad\lesssim n^{-|S|/2}(4|S|)^{|S|}(2\pi/(p(1-p)n)))^{n/2}
\end{align*}
which immediately gives the desired initial estimate noting that the final term in enumeration count from \cref{thm:count} is bounded by $n^{1/5}$ and since $|S|$ is small.

\textbf{Step 4: Cancellation from odd degree terms.}
We next prove that any polynomial factor in terms of the $\theta$ coefficients which is not an even polynomial exhibits additional cancellation. This will immediately imply the second bullet point as there are at most $2^{|S|}$ terms in $\prod_{(j,k)\in S}(\theta_j+\theta_k)$ and since there is an odd cycle component (implying every term has an index of degree $1$). In particular it suffices to bound 
\[\int_{\mbf{U}}\frac{\prod_{j\in[k]}\theta_j^{\ell_j}\prod_{(j,k)\in\binom{[n]}{2}}(1+p(e^{i(\theta_j+\theta_k)}-1))}{\exp(id\sum_{j\in[n]}\theta_j)}d\theta\]
where $\ell_k = 1$, and $k\le 2|S|$. For this, notice that by symmetry
\begin{align*}
&\bigg|\int_{\mbf{U}}\frac{\prod_{j\in [k]}\theta_j^{\ell_j}\prod_{(j,k)\in\binom{[n]}{2}}(1+p(e^{i(\theta_j+\theta_k)}-1))}{\exp(id\sum_{j\in [n]}\theta_j)}d\theta\bigg| \\
&= \frac{1}{n-k+1}\bigg|\int_{\mbf{U}}\frac{(\sum_{k\le j\le n}\theta_j)\prod_{j\in[k-1]}\theta_j^{\ell_j}\prod_{(j,k)\in\binom{[n]}{2}}(1+p(e^{i(\theta_j+\theta_k)}-1))}{\exp(id\sum_{j\in[n]}\theta_j)}d\theta\bigg|\\
&\le \frac{1}{n-k+1}\int_{\mbf{U}}\Big|\sum_{k\le j\le n}\theta_j\Big|\prod_{j\in [k-1]}|\theta_j|^{\ell_j}\prod_{(j,k)\in\binom{[n]}{2}}\big|1+p(e^{i(\theta_j+\theta_k)}-1)\big|d\theta\\
&\le \frac{2}{n}\int_{\mbf{U}}\Big|\sum_{k\le j\le n}\theta_j\Big|\prod_{j\in [k-1]}|\theta_j|^{\ell_j}\exp\bigg(\sum_{1\le j\le n}-(n-2)\frac{p(1-p)}{2}(\theta_j^2-\theta_j^4)\bigg)d\theta\\
&= \frac{2}{n}\int_{\mbf{U}}\mb{E}_{s\sim\mr{Rad}^{\otimes n}}\Big|\sum_{k\le j\le n}s_j\theta_j\Big|\prod_{j\in [k-1]}|\theta_j|^{\ell_j}\exp\bigg(\sum_{1\le j\le n}-(n-2)\frac{p(1-p)}{2}(\theta_j^2-\theta_j^4)\bigg)d\theta\\
&\le\frac{2}{n}\int_{\mbf{U}}\bigg(\mb{E}_{s\sim\mr{Rad}^{\otimes n}}\Big(\sum_{k\le j\le n}s_j\theta_j\Big)^2\bigg)^{1/2}\prod_{j\in [k-1]}|\theta_j|^{\ell_j}\exp\bigg(\sum_{1\le j\le n}-(n-2)\frac{p(1-p)}{2}(\theta_j^2-\theta_j^4)\bigg)d\theta\\
&\le\frac{2}{n}\int_{\mbf{U}}\bigg(\sum_{k\le j\le n}\theta_j^2\bigg)^{1/2}\prod_{j\in [k-1]}|\theta_j|^{\ell_j}\exp\bigg(\sum_{1\le j\le n}-(n-2)\frac{p(1-p)}{2}(\theta_j^2-\theta_j^4)\bigg)d\theta\\
&\le\frac{2}{n^{1-\eps}}\int_{\mbf{U}}\prod_{j\in[k-1]}|\theta_j|^{\ell_j}\exp\bigg(\sum_{1\le j\le n}-(n-2)\frac{p(1-p)}{2}(\theta_j^2-\theta_j^4)\bigg)d\theta\\
&\lesssim n^{-\sum_{j\in [k]}\ell_j/2-1/2+2\eps}(2\pi/(p(1-p)n)))^{n/2}
\end{align*}
as desired, where in the last line we apply \cref{lem:gaussian-inequality-2} and use that $|S|$ is small.

\textbf{Step 5: Even cycles.}
We now handle the third bullet point, proving that the integral is sufficiently close to the desired quantity. Using the technique in the previous step, and noting that given a set $S$ of $\ell$ disjoint even cycles there are $2^\ell$ terms in the expansion of $\prod_{(j,k)\in S}(\theta_j+\theta_k)$ where every vertex has even degree, we have 
\begin{align}
\bigg|\mb{E}_{G\sim\mb{G}(n,d)}\chi_S^{}-&\frac{2^{\ell + 1}(1+r^2)^{\binom{n}{2}}(p(1-p))^{|S|/2}}{(2\pi r^{d})^{n}|G(n,d)|}\int_{\mbf{U}}\frac{\prod_{j\in [|S|/2]}\theta_j^2\prod_{(j,k)\in\binom{[n]}{2}}(1+p(e^{i(\theta_j+\theta_k)}-1))}{\exp(id\sum_{j\in [n]}\theta_j)}d\theta\bigg|\notag\\
&\lesssim n^{-1/4-|S|/2}.\label{eq:even-cycle-equation}
\end{align}

Notice that $\mb{E}_{G\sim\mb{G}(n,d)}\chi_\emptyset^{} = 1$ by definition and \cref{eq:even-cycle-equation} applies with $S$ empty. Subtracting, it therefore suffices to prove 
\[\bigg|\frac{2^{\ell + 1}(1+r^2)^{\binom{n}{2}}}{(2\pi r^{d})^n|G(n,d)|}\int_{\mbf{U}}\frac{\big(\prod_{j\in [|S|/2]}\theta_j^2-(p(1-p)n)^{-|S|/2}\big)\prod_{(j,k)\in\binom{[n]}{2}}(1+p(e^{i(\theta_j+\theta_k)}-1))}{\exp(id\sum_{j\in [n]}\theta_j)}d\theta\bigg|\lesssim n^{-1/4-|S|/2}.\]
From \cref{lem:symmetric-sum-ineq} we have
\[\frac{(\sum_{j}x_j^2)^{k} - k^2(\max_jx_j^2)(\sum_{j}x_j^2)^{k-1}}{k!}\le \sum_{j_1<\cdots<j_k}x_{j_1}^2\cdots x_{j_k}^2\le \frac{(\sum_{j}x_j^2)^{k}}{k!},\]
and using our initial bounds from earlier it follows immediately that 
\[\bigg|\frac{2^{\ell + 1}(1+r^2)^{\binom{n}{2}}\binom{n}{|S|/2}^{-1}}{(2\pi r^{d})^{n}|G(n,d)|(|S|/2)!}\int_{\mbf{U}}\frac{\big((|S|/2)^2n^{-1+2\eps}(\sum_{j}\theta_j^{2})^{|S|/2-1}\big)\prod_{(j,k)\in\binom{[n]}{2}}(1+p(e^{i(\theta_j+\theta_k)}-1))}{\exp(id\sum_{j\in [n]}\theta_j)}d\theta\bigg|\lesssim n^{-1/4-|S|/2}.\]

Therefore, symmetrizing over all permutations of $[n]$ and trivially bounding some lower order contributions, we see it suffices to bound
\[\frac{2^{\ell+1}(1+r^2)^{\binom{n}{2}}\binom{n}{|S|/2}^{-1}}{(2\pi r^{d})^{n}|G(n,d)|(|S|/2)!}\int_{\mbf{U}}\frac{\big((\sum_{1\le j\le n}\theta_j^2)^{|S|/2}-(p(1-p))^{-|S|/2}\big)\prod_{(j,k)\in\binom{[n]}{2}}(1+p(e^{i(\theta_j+\theta_k)}-1))}{\exp(id\sum_{j\in [n]}\theta_j)}d\theta.\]

Notice that, once again,
\begin{align}
&\int_{\mbf{U}}\frac{\big((\sum_{1\le j\le n}\theta_j^2)^{|S|/2}-(p(1-p))^{-|S|/2}\big)\prod_{(j,k)\in\binom{[n]}{2}}(1+p(e^{i(\theta_j+\theta_k)}-1))}{\exp(id\sum_{j\in [n]}\theta_j)}d\theta\notag\\
&\le\int_{\mbf{U}}\big|\big(\sum_{1\le j\le n}\theta_j^2\big)^{|S|/2}-(p(1-p))^{-|S|/2}\big|\prod_{(j,k)\in\binom{[n]}{2}}\big|1+p(e^{i(\theta_j+\theta_k)}-1)\big|d\theta\notag\\
&\le\int_{\mbf{U}}\big|\big(\sum_{1\le j\le n}\theta_j^2\big)^{|S|/2}-(p(1-p))^{-|S|/2}\big|\exp\bigg(\sum_{1\le j\le n}-(n-2)\frac{p(1-p)}{2}(\theta_j^2-\theta_j^4)\bigg)d\theta.\label{eq:square-damping}
\end{align}
We now proceed via splitting \cref{eq:square-damping} based on the size of $\sum_{1\le j\le n}\theta_j^2$. Defining the region $\mbf{S} = \{\theta\colon|\sum_{1\le j\le n}\theta_j^2-(p(1-p))^{-1}|\ge n^{-1/3}\}$ we have that 
\begin{align*}
&\int_{\mbf{U}}\mbm{1}_{\theta\in\mbf{S}}\big|\big(\sum_{1\le j\le n}\theta_j^2\big)^{|S|/2}-(p(1-p))^{-|S|/2}\big|\exp\bigg(\sum_{1\le j\le n}-(n-2)\frac{p(1-p)}{2}(\theta_j^2-\theta_j^4)\bigg)d\theta\\
&\le\int_{\mbf{U}}\mbm{1}_{\theta\in\mbf{S}}(2n^{2\eps})^{|S|/2}\exp\bigg(\sum_{1\le j\le n}-(n-2)\frac{p(1-p)}{2}(\theta_j^2-\theta_j^4)\bigg)d\theta\\
&\le\int_{\mbf{U}}\mbm{1}_{\theta\in\mbf{S}}(2n^{2\eps})^{|S|/2}\exp\bigg(\sum_{1\le j\le n}\frac{-(n-2n^{2\eps})p(1-p)}{2}\theta_j^2\bigg)d\theta\\
&\le (2n^{2\eps})^{|S|/2} (1/(p(1-p)(n-2n^{2\eps})))^{n/2}\int_{\mb{R}^{n}}\mbm{1}_{|\sum_{j\in [n]}x_j^2-(n-2n^{2\eps})|\ge p(1-p)n^{2/3}/2}\exp\bigg(-\frac{1}{2}\sum_{j\in [n]}x_j^2\bigg)dx\\
&\le\exp(O(n^{2\eps}))\cdot(2\pi/(p(1-p)n))^{n/2}\mb{P}_{Z\sim \mc{N}(0,1)^{\otimes n}}\big[\big|\sum_{1\le j\le n}Z_j^2- n\big|\ge n^{3/5}\big]\\
&\le\exp(n^{-1/10})\cdot(2\pi/(p(1-p)n))^{n/2}
\end{align*}
which is sufficiently small as desired. For the remaining portion of \cref{eq:square-damping} notice that 
\begin{align*}
&\int_{\mbf{U}}\mbm{1}_{\theta\notin\mbf{S}}\big|\big(\sum_{1\le j\le n}\theta_j^2\big)^{|S|/2}-(p(1-p))^{|S|/2}\big|\exp\bigg(\sum_{1\le j\le n}-(n-2)\frac{p(1-p)}{2}(\theta_j^2-\theta_j^4)\bigg)d\theta\\
&\le\int_{\mbf{U}}\mbm{1}_{\theta\notin\mbf{S}}(4p(1-p))^{|S|/2}(n^{-1/3})\big|\exp\bigg(\sum_{1\le j\le n}-(n-2)\frac{p(1-p)}{2}(\theta_j^2-\theta_j^4)\bigg)d\theta\\
&\le\int_{\mbf{U}}(4p(1-p))^{|S|/2}(n^{-1/3})\big|\exp\bigg(\sum_{1\le j\le n}-(n-2)\frac{p(1-p)}{2}(\theta_j^2-\theta_j^4)\bigg)d\theta\\
&\lesssim n^{-2/7}(2\pi/(p(1-p)n))^{n/2}
\end{align*}
where we have used \cref{lem:gaussian-inequality} in the final step. The desired result follows immediately.
\end{proof}

\section{Deduction of \texorpdfstring{\cref{thm:main}}{}}\label{sec:deduction-of-main}
In order to prove \cref{thm:main} we proceed via the method of moments. We will require the following standard result regarding converting control on moments to distributional control. This follows immediately from the standard univariate method of moments via the Cram\'{e}r--Wold device (see e.g.~\cite[Theorem~3.10.6]{Dur19}) which shows that in order to prove convergence of a sequence of random variables $X_n \to \mu\in \mb{R}^d$ in distribution, it suffices to prove convergence of $X_n\cdot \theta\to \mu\cdot \theta$ for all $\theta\in \mb{R}^d$. (A proof of the univariate case of the method of moments is standard; see e.g.~\cite[Section~3.3.5,~Theorem~3.3.25]{Dur19}.)

\begin{lemma}\label{lem:method-of-moments}
Fix a vector $\mu\in \mb{R}^{d}$ and a positive definite matrix in $\Sigma\in \mb{R}^{d\times d}$. Given a sequence of random vectors $X_n\in\mb{R}^{d}$, suppose that for any sequence of nonnegative integers $(\ell_i)_{1\le i\le d}$ that
\[\mb{E}\bigg[\prod_{i=1}^{d}((X_n)_i)^{\ell_i}\bigg] \to \mb{E}_{G\sim \mc{N}(\mu, \Sigma)}\bigg[\prod_{i=1}^{d}(G_i)^{\ell_i}\bigg]\]
as $n\to\infty$. Then it follows that 
\[X_n\xrightarrow[]{d.}\mc{N}(\mu,\Sigma).\]
\end{lemma}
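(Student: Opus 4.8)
The plan is to reduce to the one-dimensional method of moments via the Cram\'{e}r--Wold device. Fix $\theta\in\mb{R}^d$ and set $Y_n = \theta\cdot X_n$ and $Y = \theta\cdot G$ where $G\sim\mc{N}(\mu,\Sigma)$; then $Y\sim\mc{N}(\theta\cdot\mu,\,\theta^\top\Sigma\theta)$ is a univariate (possibly degenerate) Gaussian, hence is determined by its moments. By the Cram\'{e}r--Wold theorem (see e.g.\ \cite[Theorem~3.10.6]{Dur19}) it suffices to show $Y_n\xrightarrow[]{d.}Y$ for every such $\theta$.

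To this end, I would expand the moments of $Y_n$. For each fixed nonnegative integer $k$, the multinomial theorem gives
\[\mb{E}[Y_n^k] = \sum_{\ell_1+\cdots+\ell_d=k}\binom{k}{\ell_1,\ldots,\ell_d}\bigg(\prod_{i=1}^d\theta_i^{\ell_i}\bigg)\mb{E}\bigg[\prod_{i=1}^d((X_n)_i)^{\ell_i}\bigg],\]
and the identical identity holds with $X_n,Y_n$ replaced by $G,Y$. This is a \emph{finite} sum (at most $\binom{k+d-1}{d-1}$ terms) whose coefficients do not depend on $n$, so the hypothesis that each mixed moment $\mb{E}[\prod_i((X_n)_i)^{\ell_i}]$ converges to $\mb{E}[\prod_i(G_i)^{\ell_i}]$ implies, term by term, that $\mb{E}[Y_n^k]\to\mb{E}[Y^k]$ for every $k$.

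Then I would invoke the univariate method of moments: the limit law $\mc{N}(\theta\cdot\mu,\theta^\top\Sigma\theta)$ is moment-determinate (its absolute moments grow like $(\theta^\top\Sigma\theta)^{k/2}(k-1)!!$, so Carleman's condition holds), and convergence of all moments of $Y_n$ to these finite limits therefore forces $Y_n\xrightarrow[]{d.}Y$; see \cite[Section~3.3.5, Theorem~3.3.25]{Dur19}. Combined with the Cram\'{e}r--Wold reduction this yields $X_n\xrightarrow[]{d.}\mc{N}(\mu,\Sigma)$. The argument is entirely routine, with no substantial obstacle; the only points requiring attention are that the moment-to-distribution implication needs the target distribution to be moment-determinate (true for Gaussians), and that the interchange of limit and summation in the moment expansion is trivially valid since the sum is finite for each fixed $k$. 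Positive-definiteness of $\Sigma$ is not actually needed for this deduction, though it is available by hypothesis.
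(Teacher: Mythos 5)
Your proposal is correct and follows essentially the same route as the paper, which likewise reduces to the univariate method of moments via the Cram\'er--Wold device and cites the same results of Durrett. The added details (multinomial expansion of the moments of $\theta\cdot X_n$ and moment-determinacy of the Gaussian via Carleman's condition) are exactly the routine steps the paper leaves implicit.
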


We also require the following graph-theoretic input which will be used when applying the method of moments. For a multigraph $G$, let $E_\mr{sing}(G)$ be the set of edges of multiplicity $1$.

\begin{lemma}\label{lem:graph-theory}
Let $\mc{H} = (H_i)_{1\le i\le k}$ be a sequence of connected graphs each of minimum degree at least $2$ (not necessarily distinct). Consider overlaying the $H_i$ in order to obtain a multigraph $G$ (so overlaying two or more edges would give the corresponding multiplicity in $G$). Let $E_\mr{sing}=E_\mr{sing}(G)$. Then we have
\[v(G)-\frac{1}{2}|E_\mr{sing}|\le\frac{1}{2}\sum_{i=1}^kv(H_i)\]
with equality if and only if each connected component of $G$ is either (i) a cycle of multiplicity $1$ which is isolated or (ii) a multigraph with all multiplicities $2$. Furthermore, in (i) the said connected component arises from a single $H_i$ which is a cycle while in (ii) the connected component arises from two $H_i,H_j$ which are isomorphic and perfectly overlaid.
\end{lemma}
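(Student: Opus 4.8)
The plan is to recast the bound as a statement about a bipartite matching and read off the equality case from it. Write $\underline{G}$ for the simple graph underlying $G$, abbreviate $E_{\mathrm{sing}}=E_{\mathrm{sing}}(G)$, and for $v\in V(G)$ let $\mathrm{cop}(v)$ be the number of copies of the $H_i$ in the overlay that contain $v$; note that $\sum_{i}v(H_i)=\sum_{v\in V(G)}\mathrm{cop}(v)$. Two observations will be used repeatedly: (a) every vertex of $G$ lies in some copy, and since each $H_i$ has minimum degree $\ge 2$ and is laid down injectively, every vertex has degree $\ge 2$ in $\underline{G}$; (b) every edge of $\underline G$ incident to a vertex $v$ with $\mathrm{cop}(v)=1$ is a singleton, since any copy using that edge would have to contain $v$. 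Using $\sum_i v(H_i)=\sum_v\mathrm{cop}(v)$, the desired inequality $v(G)-\tfrac12|E_{\mathrm{sing}}|\le\tfrac12\sum_i v(H_i)$ is equivalent to
\[
\sum_{v\in V(G)}(\mathrm{cop}(v)-1)+|E_{\mathrm{sing}}|\ge v(G).
\]
Partitioning $V(G)=A\sqcup B$ with $A=\{v:\mathrm{cop}(v)=1\}$ and $B=\{v:\mathrm{cop}(v)\ge 2\}$, we have $\sum_v(\mathrm{cop}(v)-1)\ge|B|$, so it suffices to prove $|E_{\mathrm{sing}}|\ge|A|$.

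To prove $|E_{\mathrm{sing}}|\ge|A|$ I would apply Hall's theorem to the bipartite graph between $A$ and $E_{\mathrm{sing}}$ in which $v$ is joined to the singleton edges incident to it: by (b) a vertex $v\in A$ is incident in $\underline G$ to $\deg_{\underline G}(v)\ge 2$ singletons, and for any $S\subseteq A$ the number of singletons meeting $S$ is at least $\tfrac12\sum_{v\in S}\deg_{\underline G}(v)\ge|S|$, so Hall's condition holds and there is an injection $A\hookrightarrow E_{\mathrm{sing}}$. This matching step is the real content of the lemma: straightforward inductions on the number of copies break down, because overlaying a single new copy can turn arbitrarily many previously-singleton edges into edges of multiplicity $\ge 2$, and I expect isolating this clean argument to be the main obstacle.

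For the equality case, equality forces equality in both $\sum_v(\mathrm{cop}(v)-1)=|B|$ and $|E_{\mathrm{sing}}|=|A|$ (as $v(G)=|A|+|B|$ is sandwiched between them). The former gives $\mathrm{cop}(v)\le 2$ for all $v$. For the latter, comparing $\sum_v\deg_{G_1}(v)=2|E_{\mathrm{sing}}|=2|A|$ with $\sum_{v\in A}\deg_{G_1}(v)\ge 2|A|$ (where $G_1$ is the subgraph of singleton edges) forces every $v\in A$ to have degree exactly $2$ with all incident edges singletons, and forces no vertex of $B$ to be incident to a singleton; since an edge of multiplicity $\ge 2$ has both endpoints in $B$, this yields $\underline G=\underline G[A]\sqcup\underline G[B]$ with $\underline G[A]$ a disjoint union of cycles of singleton edges and every edge inside $B$ of multiplicity exactly $2$. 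A connected component inside $A$ is then a cycle of multiplicity $1$ which, being a union of vertex-disjoint connected copies, is a single copy that is a cycle — case (i). For a component $C$ inside $B$, every vertex lies in exactly two copies and every edge has multiplicity exactly $2$; a short case analysis, using connectivity of the $H_i$ and the fact that a multiplicity-$2$ edge lies in precisely the two copies containing both its endpoints, rules out three or more copies meeting $C$ (they would force some vertex into three copies), so $C$ arises from two copies $H_i,H_j$ with $V(H_i)=V(H_j)=V(C)$ and $E(H_i)=E(H_j)=E(\underline C)$, i.e.\ isomorphic and perfectly overlaid — case (ii). The converse is a one-line computation for each of (i) and (ii), carried out componentwise; the fiddliest points will be the component dichotomy inside $B$ and keeping the bookkeeping honest in the equality step.
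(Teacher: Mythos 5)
Your proof is correct and follows essentially the same route as the paper: the key inequality $|E_{\mathrm{sing}}|\ge|A|$ is exactly the paper's mass-distribution double count (your verification of Hall's condition \emph{is} that double count, so invoking Hall's theorem is an unnecessary detour), and your equality analysis — forcing $\mathrm{cop}(v)\le 2$, degree exactly $2$ at multiplicity-$1$ vertices, the $A$/$B$ component dichotomy, and the neighbor-iteration argument for doubled components — matches the paper's.
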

\begin{proof}
Note that $E_\mr{sing}$ contains precisely the edges of $G$ without multiplicity, which therefore arise from a single graph in $\mc{H}$. Additionally, it trivially suffices to prove the claim for each connected component of $G$ individually. Equivalently, we may assume $G$ is connected. We have
\begin{align*}
\frac{1}{2}\sum_{i=1}^kv(H_i)&-v(G)+\frac{|E_\mr{sing}|}{2}=\sum_{v\in V(G)}\bigg(\bigg(\frac{1}{2}\sum_{i=1}^k\mbm{1}_{v\in H_i}\bigg)-1\bigg)+\frac{|E_\mr{sing}|}{2}\\
&\ge -\frac{1}{2}\sum_{v\in V(G)}\mbm{1}[|\{i\in[k]\colon v\in H_i\}| = 1]+\frac{|E_\mr{sing}|}{2}\\
&\ge-\frac{1}{2}\sum_{v\in V(G)}\mbm{1}[|\{i\in[k]\colon v\in H_i\}| = 1]+\sum_{v\in V(G)}\frac{2\mbm{1}[|\{i\in [k]\colon v\in H_i\}| = 1]}{4}\ge 0.
\end{align*}
In the last line, the first inequality is justified as follows: consider distributing a mass of $1/2$ on each edge in $E_\mr{sing}$ into $1/4$ on both its vertices. Note that every vertex that appears in exactly one $H_i$ must be contributed by at least $2$ such edges, since the minimum degree is at least $2$ and such edges clearly must be singletons.

For equality to occur notice that every vertex must have $\mc{H}$-multiplicity $1$ or $2$ (i.e.~appears in $1$ or $2$ of the $H_i$), each singleton edge must occur between two vertices of $\mc{H}$-multiplicity $1$, and no $\mc{H}$-multiplicity $1$ vertex has degree larger than $2$. Notice that as we assumed $G$ is connected, we must have that either all vertices have $\mc{H}$-multiplicity $1$ or all have $\mc{H}$-multiplicity $2$: if there is an edge between the two different types of vertex then it must be a singleton (since one of the endpoints is $\mc{H}$-multiplicity $1$) and hence we have a contradiction to the required property of singleton edges in the equality case. Now, if all vertices are $\mc{H}$-multiplicity $1$, then $G$ must arise from a single graph $H_1$, and the equality case is immediately seen to be a cycle using our assumption that $G=H_1$ is connected and also that every vertex has degree exactly $2$.

We now focus on the complementary case that every vertex has $\mc{H}$-multiplicity $2$ and hence there are no singleton edges. Consider a vertex $v$ of $G$ and suppose without loss of generality that it is in $H_1$ and $H_2$. Every $G$-neighbor $w$ of $v$ has the property that edge $(v,w)$ is not singleton, which implies this edge must be present in both $H_1$ and $H_2$. Thus $w$ is in $H_1$ and $H_2$ (and no other $H_i$ since it has $\mc{H}$-multiplicity $2$). Iterating this argument, and using that $G$ is connected, we see that every vertex of $G$ is in $H_1$ and $H_2$, implying that $k=2$. Since there are no singleton edges, this must be a direct overlay of equal graphs.

Finally, we easily check that (i) and (ii) are easily seen to indeed give equality.
\end{proof}

We now prove \cref{thm:main}. Given the results proven so far this is essentially a routine computation with the method of moments. 

\begin{proof}[Proof of \cref{thm:main}]
Fix a collection of connected graphs $\mc{H} = \{H_i\colon 1\le i\le k\}$ of minimum degree at least $2$. In order to apply the method of moments consider fixed values $\ell_1,\ldots,\ell_k$ and write 
\begin{equation}\label{eq:gamma-moment}
\mb{E}_{G\sim\mb{G}(n,d)}\bigg[\prod_{i=1}^k\gamma_{H_i}(G)^{\ell_i}\bigg] = \bigg(\prod_{i=1}^k\sigma_{H_i}^{-\ell_i}\bigg)\sum_{\substack{1\le i\le k\\1\le j\le\ell_i\\H_{i,j}\simeq H_i}}\mb{E}_{G\sim\mb{G}(n,d)}\bigg[\prod_{i=1}^k\prod_{j=1}^{\ell_i}\chi_{H_{i,j}}^{}\bigg].
\end{equation}
Here the $H_{i,j}$ are embedded into $K_n$, and we are summing over possible simultaneous choices of such unlabeled copies. Recall the definition of $E_H,\sigma_H$ from \cref{def:graph-factor}, and note this means $\prod_{i=1}^k\sigma_{H_i}(G)^{\ell_i} = \Theta(n^{\sum_{i=1}^k\ell_iv(H_i)/2}) = \Theta(n^{\sum_{i,j}v(H_{i,j})/2})$.

We consider the terms based on the isomorphism class of $G = \bigcup_{1\le i\le k}\bigcup_{1\le j\le\ell_i}H_{i,j}$, treating $G$ as a multigraph. Let $E_\mr{sing}=E_\mr{sing}(G)$, the set of singleton edges in the isomorphism class. Notice that the contribution of terms based on $G$ is bounded by $O(n^{v(G)}n^{-|E_\mr{sing}|/2 + 1/3})$ using the first bullet of \cref{prop:graph-factor-computation} (and using that if an edge is repeated multiple times, the corresponding $\chi_e^t$ term can be reduced to a linear combination of $1,\chi_e^{}$ with coefficients depending only on $t$ and $p$). Thus if $v(G)-|E_\mr{sing}|/2<\sum_{i,j}v(H_{i,j})/2$ the terms contribute negligibly (namely, $O(n^{-1/6})$) to the quantity \cref{eq:gamma-moment}, since this implies $v(G)-|E_\mr{sing}|/2\le-1/2+\sum_{i,j}v(H_{i,j})/2$.

But recall that by \cref{lem:graph-theory} we have $v(G)-|E_\mr{sing}|/2\le \sum_{i,j}v(H_{i,j})/2$, and equality occurs only in certain specialized cases where each graph $H_{i,j}$ either (i) is a cycle and its vertices are not used by any other $H_{i',j'}$ or (ii) is perfectly overlaid with another $H_{i,j'}$ (with the same isomorphism type) as equal copies. The earlier analysis shows we may restrict attention to such equality cases, so now we more closely characterize which such terms contribute. Without loss of generality, let us assume that $H_1,\ldots,H_m$ are cycles, if any, while $H_{m+1},\ldots,H_k$ are not cycles. Notice that if any $\ell_t$ for $t\in[m+1,k]$ is odd then it is impossible to pair up and overlay all the $H_{t,i}$ for $i\in[\ell_t]$. This violates the equality condition, so is not possible. Thus if $\ell_t$ for some $t\in[m+1,k]$ is odd, then the total contribution to \cref{eq:gamma-moment} is $O(n^{-1/6})$.

Now consider $H_t$ with $1\le t\le m$. If $H_t$ is an odd cycle and is not overlaid with another, then by (i) above it is isolated within $G$. The second bullet of \cref{prop:graph-factor-computation} again shows the total contribution of terms with such an unpaired $H_t$ to \cref{eq:gamma-moment} is $O(n^{-1/6})$.

Finally, we have a situation where all graphs $H_{i,j}$ except even cycles must be paired among themselves and the even cycles $H_{i,j}$ are either isolated in $G$ or paired with another even cycle $H_{i,j'}$ of the same size and overlaid. Without loss of generality let $H_1,\ldots,H_{m'}$ be the even cycles, if any.

The number of choices for pairing up the graphs other than even cycles is $\prod_{i=m'+1}^k\ell_i!!$. The number of choices for pairing up $s_i\le\ell_i/2$ even cycles for $i\in[m']$ is $\prod_{i=1}^{m'}\binom{\ell_i}{2s_i}(2s_i)!!$. In such a pairing, let $\mc{U}_i\subseteq[\ell_i]$ be the list of unpaired indices for $i\in[m']$. We find that $\prod_{i=1}^k\prod_{j=1}^{\ell_i}\chi_{H_{i,j}}^{}$ is a product of various terms of the form $\chi_e^{}$ for $e\in H_{i,j}$ where $i\in[m']$ and $j\in\mc{U}_i$, as well as terms of the form $\chi_e^2$ in certain connected components of $G$. There are $\sum_{i=1}^{m'}(\ell_i-2s_i)v(H_i)$ vertices of the former type and $\sum_{i=1}^{m'}s_iv(H_i)+\sum_{i=m'+1}^k(\ell_i/2)v(H_i)$ of the latter type. Note that $\chi_e^2=1-(2p-1)\chi_e^{}/\sqrt{p(1-p)}$, and expanding out the repeated terms in such a way yields one term of the form $\prod_{i=1}^{m'}\prod_{j\in\mc{U}_i}\chi_{H_{i,j}}^{}$ and others with additional terms of the form $(2p-1)\chi_e^{}/\sqrt{p(1-p)}$ tacked on. The contribution of such other terms totals at most, by the first bullet of \cref{prop:graph-factor-computation},
\[O\Big(n^{\sum_{i=1}^{m'}(\ell_i-s_i)v(H_i)+\sum_{i=m'+1}^k(\ell_i/2)v(H_i)}\cdot n^{-\sum_{i=1}^{m'}\sum_{j\in\mc{U}_i}e(H_{i,j})/2-1/2+1/3}\Big).\]
The exponent is bounded by $v(G)/2-1/3+\sum_{i=1}^{m'}\sum_{j\in\mc{U}_i}(v(H_{i,j})-e(H_{i,j})/2) = \sum_{i=1}^k\ell_iv(H_i)/2-1/6$ since $H_{i,j}$ for $i\in[m']$ is a cycle, so in \cref{eq:gamma-moment} this amounts to a total contribution of $O(n^{-1/6})$.

Finally, what remains is
\[\mb{E}_{G\sim\mb{G}(n,d)}\bigg[\prod_{i=1}^k\gamma_{H_i}(G)^{\ell_i}\bigg] = \bigg(\prod_{i=1}^k\sigma_{H_i}^{-\ell_i}\prod_{i=m'+1}^k\ell_i!!\bigg)\sum_{s_i\le\ell_i/2}{\sideset{}{^\ast}\sum_{H_{i,j}}}\mb{E}\prod_{i=1}^{m'}\bigg(\binom{\ell_i}{2s_i}(2s_i)!!\prod_{j=2s_i+1}^{\ell_i}\chi_{H_{i,j}}^{}\bigg) + O(n^{-1/6}),\]
where $\sum_{s_i\le\ell_i/2}$ denotes a simultaneous choice of such nonnegative integers $s_i$ for $i\in[m']$ and where ${\sum}^\ast$ denotes a sum over choices of $H_{i,j}$ such that they are all vertex-disjoint other than pairs $H_{i,2j-1}=H_{i,2j}$ for $1\le j\le s_i/2$ when $1\le i\le m'$ as well as for $1\le j\le\ell_i/2$ when $m'+1\le i\le k$. This equation basically means that we can validly pair up the necessary graphs and then replace the $\chi_e^2$ terms by $1$. Furthermore, it is not hard to see based on the considerations so far that we can remove the vertex-disjointness condition between different $H_{i,j}$ without changing the error rate, and thus we can write
\begin{align*}
\mb{E}_{G\sim\mb{G}(n,d)}\bigg[\prod_{i=1}^k\gamma_{H_i}(G)^{\ell_i}\bigg] &= \bigg(\prod_{i=1}^k\sigma_{H_i}^{-\ell_i}\prod_{i=m'+1}^k\ell_i!!\bigg(\binom{n}{v(H_i)}\frac{v(H_i)!}{\mr{aut}(H_i)}\bigg)^{\ell_i/2}\bigg)\times\\
&\sum_{s_i\le\ell_i/2}\bigg(\prod_{i=1}^{m'}\binom{\ell_i}{2s_i}(2s_i)!!\bigg(\binom{n}{v(H_i)}\frac{v(H_i)!}{\mr{aut}(H_i)}\bigg)^{s_i}(\mb{E}\gamma_{H_i})^{\ell_i-2s_i}\bigg) + O(n^{-1/6}),\\
&= \prod_{i=m'+1}^k\ell_i!!\sum_{s_i\le\ell_i/2}\prod_{i=1}^{m'}\binom{\ell_i}{2s_i}(2s_i)!!(\sigma_{H_i}^{-1}\mb{E}\gamma_{H_i}(G))^{\ell_i-2s_i}+O(n^{-1/6}),
\end{align*}
using the formula for $\sigma_H$ in the second step. Finally, the third bullet of \cref{prop:graph-factor-computation} shows that $\mb{E}\gamma_{H_i}(G)=(1+O(n^{-1/5}))2n^{-v(H_i)/2}\cdot\binom{n}{v(H_i)}\frac{v(H_i)!}{\mr{aut}(H_i)}=(1+O(n^{-1/5}))E_H$ for $i\in[m']$. We also know that $E_H=(2/v(H))^{1/2}\sigma_H$ hence we find
\[\mb{E}_{G\sim\mb{G}(n,d)}\bigg[\prod_{i=1}^k\gamma_{H_i}(G)^{\ell_i}\bigg] = \prod_{i=m'+1}^k\ell_i!!\sum_{s_i\le\ell_i/2}\prod_{i=1}^{m'}\binom{\ell_i}{2s_i}(2s_i)!!(E_H/\sigma_H)^{\ell_i-2s_i}+O(n^{-1/6}),\]
which can be seen to match the moments of $\mc{N}(E_H/\sigma_H,1)^{\otimes m'}\otimes\mc{N}(0,1)^{\otimes(k-m')}$. Using \cref{lem:method-of-moments} and shifting appropriately, this implies the desired
\[\big(\tilde{\gamma}_{H_i}(G)\big)_{1\le i\le k}\xrightarrow[]{d.}\mc{N}(0,1)^{\otimes k}.\]
Finally, we briefly note that the moment computations above where $\ell_i\in\{1,2\}$ and all $\ell_j=0$ for $j\neq i$ show that the means and variances are as claimed.
\end{proof}

\section{Computations with Graph Factors}\label{sec:theory}
We now prove that any fixed degree polynomial in the indicator functions $x_e\in\{0,1\}$ which is symmetric under vertex permutation can be rewritten (so that it agrees on the set of $d$-regular graphs) as a function of connected graph factors of the form in \cref{def:graph-factor}. The reduction specifically to connected graph factors appears essentially in the work of Janson \cite[p.~347]{Jan95}.

\begin{lemma}\label{lem:connected-reduction}
Given a disconnected graph $H$ (with no isolated vertices) with connected components $H_1,\ldots,H_k$, $\gamma_{H}(\mbf{x})-\prod_{i=1}^k\gamma_{H_i}(G)$ can be expressed (as a function on graphs) as a sum of $\gamma_{H'}$ with $v(H')<v(H)$ (though $H'$ may be itself disconnected). Furthermore, the coefficients of the sum are bounded by $O(1/(p(1-p))^{O_H(1)})$.
\end{lemma}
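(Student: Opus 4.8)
The plan is to compute $\prod_{i=1}^{k}\gamma_{H_i}(\mathbf{x})$ directly from \cref{def:graph-factor}: writing each factor as a sum over embedded copies gives $\prod_{i=1}^{k}\gamma_{H_i}(\mathbf{x}) = \sum_{(E_1,\dots,E_k)}\chi_{E_1}^{}\cdots\chi_{E_k}^{}$, the sum ranging over $k$-tuples with $E_i\subseteq K_n$ and $E_i\simeq H_i$. Call a tuple \emph{generic} if the $E_i$ are pairwise vertex-disjoint. For a generic tuple the overlay $F:=\bigcup_i E_i$ satisfies $F\simeq H$ and $\chi_{E_1}^{}\cdots\chi_{E_k}^{}=\chi_F^{}$, and conversely each copy $F\simeq H$ in $K_n$ arises from exactly $\prod_j m_j!$ generic tuples, where $m_j$ is the number of components of $H$ in the $j$-th isomorphism class (one assigns the $k$ indices to the components of $F$ compatibly with isomorphism type). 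Hence the generic tuples contribute precisely $\big(\prod_j m_j!\big)\gamma_H(\mathbf{x})$, which is $\gamma_H(\mathbf{x})$ when the $H_i$ are pairwise non-isomorphic. Every non-generic tuple has two $E_i$ sharing a vertex, so $v\big(\bigcup_i E_i\big)\le\sum_i v(H_i)-1=v(H)-1$.

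It remains to express the non-generic part as a combination of $\gamma_{H'}$ with $v(H')<v(H)$. The contribution $\chi_{E_1}^{}\cdots\chi_{E_k}^{}$ of a tuple is a product $\prod_e\chi_e^{t_e}$ with $t_e=\#\{i:e\in E_i\}$; using $x_e^2=x_e$ one checks $\chi_e^2=1-\tfrac{2p-1}{\sqrt{p(1-p)}}\chi_e^{}$, and iterating, $\chi_e^{t}=a_t+b_t\chi_e^{}$ for $t\ge1$ with $a_t,b_t$ polynomials in $(p(1-p))^{-1/2}$ of degree $O(t)$. Substituting this for each edge with $t_e\ge2$ and expanding, every non-generic contribution becomes a linear combination of multilinear monomials $\chi_F^{}$ with $F\subseteq\bigcup_iE_i$, hence $v(F)\le v(H)-1$. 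Put $R(\mathbf{x}):=\prod_i\gamma_{H_i}(\mathbf{x})-\big(\prod_j m_j!\big)\gamma_H(\mathbf{x})$; by the above, $R$ is a multilinear polynomial in $\{\chi_e^{}\}$ all of whose monomials $\chi_F^{}$ have $v(F)\le v(H)-1$. Since $\prod_i\gamma_{H_i}$ and $\gamma_H$ are both invariant under the action of $S_n$ on $[n]$, so is $R$; hence in $R=\sum_F c_F\chi_F^{}$ the coefficient $c_F$ depends only on the isomorphism type of the graph spanned by $F$, and collecting terms gives $R=\sum_{v(H')<v(H)}c_{H'}\gamma_{H'}(\mathbf{x})$ (up to a numerical constant term). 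This is exactly the claimed expression for $\gamma_H-\prod_i\gamma_{H_i}$ when the components of $H$ are pairwise non-isomorphic, and in general expresses $\gamma_H-\big(\prod_j m_j!\big)^{-1}\prod_i\gamma_{H_i}$ as such a sum, which is what the recursive reduction in \cref{sec:theory} requires.

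For the bound on the coefficients, each $c_{H'}$ is assembled from the substitutions above, and every monomial $\chi_F^{}$ it collects is produced using at most $\sum_i e(H_i)=O_H(1)$ applications of the rule $\chi_e^{t}\mapsto a_t+b_t\chi_e^{}$; consequently each $c_{H'}$ is a polynomial in $(p(1-p))^{-1/2}$ of degree $O_H(1)$, giving the asserted bound. The step I expect to be the crux is the repackaging by symmetry in the second paragraph: one must verify that the substitutions $\chi_e^{t}\mapsto a_t+b_t\chi_e^{}$ preserve $S_n$-invariance (they do, being applied uniformly over all edges), that every non-generic overlay genuinely drops at least one vertex so that all surviving monomials have strictly fewer than $v(H)$ vertices, and that the combinatorial factor $\prod_j m_j!$ coming from relabelling isomorphic components is tracked correctly; everything else is routine bookkeeping.
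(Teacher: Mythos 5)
The paper does not actually write out a proof of this lemma (it is asserted with a pointer to Janson), so I am judging your argument on its own. Your core argument is correct and is surely the intended one: split the expansion of $\prod_i\gamma_{H_i}$ into generic (pairwise vertex-disjoint) tuples, which reassemble into $\big(\prod_j m_j!\big)\gamma_H$, and non-generic tuples, whose overlays span at most $v(H)-1$ vertices; then eliminate repeated edges via $\chi_e^t=a_t+b_t\chi_e^{}$ and use $S_n$-invariance, together with the uniqueness of the multilinear expansion in $\{\chi_e^{}\}$, to regroup the remainder into graph factors on fewer vertices. Your observation that the literal statement needs the correction factor $\prod_j m_j!$ when components of $H$ repeat is accurate (e.g.\ $\gamma_{C_3}^2=2\gamma_{2C_3}+\cdots$) and does not affect the inductive use of the lemma in \cref{sec:theory}.

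The gap is in your final paragraph, on the coefficient bound. The coefficient of $\gamma_{H'}$ in the regrouped expression is the common value of $c_F$ over $F\simeq H'$, and $c_F$ aggregates contributions from \emph{every} non-generic tuple $(E_1,\dots,E_k)$ with $E_{\mathrm{sing}}\big(\bigcup_iE_i\big)\subseteq F\subseteq\bigcup_iE_i$ (edges of multiplicity $1$ are never dropped since $a_1=0$, but edges of multiplicity $\ge 2$ can be). The number of such tuples grows polynomially in $n$ whenever $\bigcup_iE_i$ has vertices outside $V(F)$ incident only to dropped edges. Concretely, for $H=2C_3$ and $\beta=-(2p-1)/\sqrt{p(1-p)}$ one computes
\[\gamma_{C_3}(\mathbf{x})^2-2\gamma_{2C_3}(\mathbf{x})=\binom{n}{3}+\beta(n-2)\,\gamma_{K_2}(\mathbf{x})+\beta^2\gamma_{P_3}(\mathbf{x})+\beta^3\gamma_{C_3}(\mathbf{x})+2\gamma_{\text{bowtie}}(\mathbf{x})+4\gamma_{C_4}(\mathbf{x})+2\beta\,\gamma_{\text{diamond}}(\mathbf{x}),\]
so the coefficients of $\gamma_\emptyset$ and $\gamma_{K_2}$ are $\Theta(n^3)$ and $\Theta(n)$. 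Since distinct graph factors are linearly independent, this decomposition is forced, so the uniform bound $O\big((p(1-p))^{-O_H(1)}\big)$ cannot hold as stated; your claim that each $c_{H'}$ is a bounded-degree polynomial in $(p(1-p))^{-1/2}$ with $O_H(1)$ coefficients overlooks exactly this aggregation over the choices of the dropped doubled edges and their extra vertices. The correct statement is $|c_{H'}|=O\big(n^{\,v(H)-1-v(H')}(p(1-p))^{-O_H(1)}\big)$ (or, equivalently, one retains the extra vertices as isolated vertices of $H'$ so that the combinatorial factor is absorbed into the graph factor). One must then verify that this weaker bound still suffices where the lemma is invoked, in \cref{lem:variance-reduction}: there the large-coefficient terms are precisely those with many dropped vertices, and one has to track $|c_{H'}|^2\mathrm{Var}[\gamma_{H'}]$ rather than $|c_{H'}|$ alone, using that each dropped vertex costs at least one doubled edge (and that the smallest offending factors, such as $\gamma_\emptyset$, $\gamma_{K_2}$, $\gamma_{P_3}$, are deterministic on $d$-regular graphs). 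The remainder of your argument --- the vertex-count drop for non-generic tuples, the preservation of $S_n$-invariance under the uniform substitution, and the multiplicity bookkeeping --- is fine.
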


This can clearly be inductively applied to show that the connected graph factors generate all graph factors using polynomial expressions. The crucial lemma for our work is that given a connected graph $H$ with a vertex of degree $1$, the graph factor $\gamma_H(\mbf{x})$ can be simplified further (since our input graphs are regular).

\begin{lemma}\label{lem:degree1-reduction}
Given a graph $H$ (with no isolated vertices) with a vertex of degree $1$ then $\gamma_{H}(\mbf{x})$ can be expressed, as a function on $d$-regular graphs, as a sum of $\gamma_{H'}(\mbf{x})$ with $v(H')<v(H)$. Furthermore the coefficients of the sum are bounded by $O(1/(p(1-p))^{O_H(1)})$.
\end{lemma}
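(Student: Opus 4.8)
The plan is to ``sum out'' a degree-$1$ vertex using the single linear constraint that $d$-regularity places on the variables $\chi_e^{}$. Fix a vertex $v$ of $H$ with $\deg_H(v)=1$ and let $u$ be its unique neighbour. Since $H$ has no isolated vertices, each unlabelled copy of $H$ in $K_n$ is the image of exactly $\mr{aut}(H)$ injections $V(H)\hookrightarrow[n]$, so $\mr{aut}(H)\,\gamma_H(\mbf{x})=\sum_{\phi}\prod_{e\in E(H)}\chi_{\phi(e)}^{}$, the sum over injections $\phi\colon V(H)\hookrightarrow[n]$. First I would split $\phi$ into its restriction $\psi$ to $V(H)\setminus\{v\}$ and the value $w=\phi(v)\notin\mathrm{Im}(\psi)$, giving
\[
\mr{aut}(H)\,\gamma_H(\mbf{x})=\sum_{\psi}\Big(\prod_{e\in E(H-v)}\chi_{\psi(e)}^{}\Big)\sum_{w\notin\mathrm{Im}(\psi)}\chi_{\psi(u)w}^{}.
\]
Now the only place regularity enters: $\sum_{b\neq a}\chi_{ab}^{}=(d-(n-1)p)/\sqrt{p(1-p)}=0$ for every vertex $a$ of a $d$-regular graph, since $p=d/(n-1)$. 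Hence $\sum_{w\notin\mathrm{Im}(\psi)}\chi_{\psi(u)w}^{}=-\sum_{y\in V(H-v)\setminus\{u\}}\chi_{\psi(u)\psi(y)}^{}$, so
\[
\mr{aut}(H)\,\gamma_H(\mbf{x})=-\sum_{y\in V(H-v)\setminus\{u\}}\ \sum_{\psi}\Big(\prod_{e\in E(H-v)}\chi_{\psi(e)}^{}\Big)\chi_{\psi(u)\psi(y)}^{}.
\]

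Next I would identify each inner sum as a (multiple of a) graph factor on fewer vertices. If $uy\notin E(H-v)$, then $E(H-v)\cup\{uy\}$ spans a graph $H_y$ on $V(H-v)$ with $v(H)-1$ vertices and no isolated vertex, and the inner sum is $\mr{aut}(H_y)\gamma_{H_y}(\mbf{x})$. If $uy\in E(H-v)$ — which forces $\deg_H(u)\ge 2$, so $H-v$ itself has no isolated vertex — then the inner sum carries a factor $\chi_{\psi(u)\psi(y)}^2$, and applying the identity $\chi_e^2=1-(2p-1)\chi_e^{}/\sqrt{p(1-p)}$ (as in the proof of \cref{thm:main}) splits it as $\mr{aut}(G_y)\gamma_{G_y}(\mbf{x})-\tfrac{2p-1}{\sqrt{p(1-p)}}\,\mr{aut}(H-v)\,\gamma_{H-v}(\mbf{x})$ with $G_y=(V(H-v),E(H-v)\setminus\{uy\})$. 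Summing over $y$, this writes $\gamma_H$ as a linear combination of $\gamma_{H'}$ with $v(H')=v(H)-1$, where each coefficient is a ratio of automorphism counts times either $1$ or $(2p-1)/\sqrt{p(1-p)}$, hence $O(1/(p(1-p))^{O_H(1)})$. The remaining case $\deg_H(u)=1$ — where $\{u,v\}$ spans a $K_2$ connected component of $H$ — is handled separately and more cheaply: $\gamma_{K_2}=\sum_e\chi_e^{}=0$ on $d$-regular graphs, so \cref{lem:connected-reduction} applied to the decomposition of $H$ into $\{u,v\}$ and the rest already yields the claim.

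The one step that needs real care — and the only obstacle — is that deleting $uy$ in the case $uy\in E(H-v)$ can create isolated vertices (namely $u$ if $\deg_H(u)=2$, or $y$ if $y$ is a second pendant neighbour of $u$); already $H=P_3$ gives $\gamma_{P_3}=-\binom n2$ on $d$-regular graphs. The clean resolution is to allow $G_y$ (and more generally the targets $H'$) to carry isolated vertices, with $\gamma_{G_y}$ defined by the same injection-sum formula $\mr{aut}(G_y)^{-1}\sum_{\psi\colon V(G_y)\hookrightarrow[n]}\prod_{e\in E(G_y)}\chi_{\psi(e)}^{}$; then the vertex count still strictly decreases and the coefficient is $\mr{aut}(G_y)/\mr{aut}(H)=O_H(1)$. (Alternatively one factors out the falling-factorial count of placements of those isolated vertices, producing a polynomial-in-$d$ scalar that must be tracked in the bookkeeping.) Apart from pinning down this convention, the whole argument is the single use of the regularity relation above followed by the $\chi_e^2$-reduction, and the final equality is an identity of polynomials in the $x_e$ restricted to the $d$-regular graphs.
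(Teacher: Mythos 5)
Your argument is correct and is exactly the paper's proof: the paper likewise fixes the pendant vertex $v$ with neighbour $u$, replaces the pendant edge $\chi_{(u,v)}^{}$ by $-\sum\chi_{(u,w)}^{}$ over the remaining vertices $w$ of the copy using the regularity identity $\sum_{w\neq u}\chi_{(u,w)}^{}=0$, and then invokes $\chi_e^2=1-(2p-1)\chi_e^{}/\sqrt{p(1-p)}$ to absorb coincidences with existing edges. The additional bookkeeping you supply --- the $\mr{aut}$ normalizations, the separate treatment of a $K_2$ component when $\deg_H(u)=1$, and the convention for targets $\gamma_{H'}$ acquiring isolated vertices --- is care that the paper's two-line proof leaves implicit.
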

\begin{proof}
Let $v$ be a vertex in $H$ of degree $1$ and $(u,v)$ be the unique edge in $H$ connected to $v$. Notice that, considering this as a sum over possible choices of $v$, we have $\sum_{v\neq u}\chi_{(v,u)}^{} = 0$ by $d$-regularity. Therefore it follows that
\begin{align*}
\gamma_H(\mbf{x}) &= \sum_{\substack{E\subseteq K_n\\E\simeq H}}\prod_{e\in E}\chi_e^{}= \sum_{\substack{E\subseteq K_n\\E\simeq H}}\chi_{(u,v)}^{}\prod_{e\in E\setminus\{(u,v)\}}\chi_e^{}\\
&= \sum_{\substack{E\subseteq K_n\\E\simeq H}}\bigg(-\sum_{u\in V(E)\setminus\{v\}}\chi_{(u,v)}^{}\bigg)\prod_{e\in E\setminus\{(u,v)\}}\chi_e^{}
\end{align*}
and the desired result follows immediately using that $\chi_e^2 = 1 - (2p-1)\chi_e^{}/\sqrt{p(1-p)}$.
\end{proof}

Note that iterating \cref{lem:connected-reduction,lem:degree1-reduction} shows we can write any graph factor on $d$-regular graphs as a function (in terms of $d$) of ones that are connected and with minimum degree at least $2$. (In particular, any graph factor corresponding to a tree can be expressed in terms of graph factors with cycles as well as the constant $\gamma_\emptyset=1$.) However, we will not explicitly need this fact, but rather its implication that variances of graph factors satisfy a reasonable uniform bound. Furthermore, having a degree $1$ vertex (such as with trees) leads to a natural power-saving in this bound.

\begin{lemma}\label{lem:variance-reduction}
Suppose that $n/\log n\le\min(d,n-d)$. Given a graph $H$ (with no isolated vertices) we have $\mr{Var}_{G\sim\mb{G}(n,d)}(\gamma_H(G)) = O(n^{v(H)})$. Furthermore if $H$ has a degree $1$ vertex, we have $\mr{Var}_{G\sim\mb{G}(n,d)}(\gamma_H(G))\le n^{v(H)-2/3}$.
\end{lemma}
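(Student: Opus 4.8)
The plan is to expand the variance directly as a sum over pairs of embedded copies and apply \cref{prop:graph-factor-computation} to control each term, exactly as in the proof of \cref{thm:main} but without needing the full equality analysis of \cref{lem:graph-theory}. Write $\gamma_H(\mbf{x}) = \sum_{E\simeq H}\chi_E^{}$ so that $\mr{Var}(\gamma_H(G)) = \sum_{E,E'\simeq H}\big(\mb{E}[\chi_E^{}\chi_{E'}^{}] - \mb{E}[\chi_E^{}]\mb{E}[\chi_{E'}^{}]\big)$. For each ordered pair $(E,E')$, overlay them to form a multigraph $G_0$ on $v(G_0)\le 2v(H)$ vertices; reducing each repeated edge via $\chi_e^2 = 1 - (2p-1)\chi_e^{}/\sqrt{p(1-p)}$ turns $\chi_E^{}\chi_{E'}^{}$ into a linear combination (with coefficients $O((p(1-p))^{-O_H(1)})$) of $\chi_S^{}$ terms with $S\subseteq E_{\mr{sing}}(G_0)$, and similarly for $\mb{E}[\chi_E^{}]\mb{E}[\chi_{E'}^{}]$ using the two components separately. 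The first bullet of \cref{prop:graph-factor-computation} then bounds each such expectation by $O(n^{-|S|/2 + 1/4})$, and there are $O(n^{v(G_0)})$ pairs with a given overlay pattern, so the total contribution of a given pattern is $O\big(n^{v(G_0) - |E_{\mr{sing}}(G_0)|/2 + 1/4}\big)$.

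The key observation is that the number of edges of the overlay in $E_{\mr{sing}}(G_0)$, i.e. the count after cancellation of squared edges, is exactly twice what \cref{lem:graph-theory} governs for the two-element sequence $(E,E')$ (each of isomorphism type $H$): we get $v(G_0) - \tfrac12|E_{\mr{sing}}(G_0)| \le \tfrac12(v(H)+v(H)) = v(H)$. Hence every overlay pattern contributes $O(n^{v(H)+1/4})$ — not quite good enough on its own, so one must be slightly more careful: when $v(G_0) - \tfrac12|E_{\mr{sing}}(G_0)| < v(H)$ strictly, the gap is at least $1/2$ and the contribution is $O(n^{v(H) - 1/4})$, which is negligible. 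When equality holds, \cref{lem:graph-theory} tells us we are in one of the two equality cases; case (ii) (perfect overlay $E = E'$ as copies) gives $\chi_E^{}\chi_{E'}^{} = \chi_E^2\cdots = $ a product of squared edges whose expectation, after reduction, matches $\mb{E}[\chi_E^{}]\mb{E}[\chi_{E'}^{}]$ up to lower order, so these terms cancel against the $\mb{E}\gamma_H\cdot\mb{E}\gamma_H$ subtraction; case (i) requires $H$ itself to be a cycle overlaid with a disjoint copy, which after the $\mb{E}[\chi_E^{}]\mb{E}[\chi_{E'}^{}]$ subtraction again contributes nothing beyond lower order (the two disjoint cycles factorize). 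This leaves only $O(n^{v(H)})$, proving the first claim; to be safe one tracks that the leftover non-cancelling terms are genuinely $O(n^{v(H)})$ rather than larger, which follows since the only surviving non-negligible overlay patterns are the equality ones.

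For the improvement when $H$ has a degree-$1$ vertex, the plan is to apply \cref{lem:degree1-reduction}: $\gamma_H(\mbf{x})$ equals, as a function on $d$-regular graphs, a sum of $\gamma_{H'}(\mbf{x})$ with $v(H') \le v(H) - 1$ and coefficients $O((p(1-p))^{-O_H(1)})$. Then $\mr{Var}(\gamma_H(G)) = \mr{Var}\big(\sum_{H'} c_{H'}\gamma_{H'}(G)\big) \le \big(\sum_{H'}|c_{H'}|\big)\sum_{H'}|c_{H'}|\,\mr{Var}(\gamma_{H'}(G)) = O(n^{v(H)-1})$ by Cauchy–Schwarz and the first part applied to each $H'$ (the $H'$ range over a finite, $H$-dependent set, and $p(1-p) \ge 1/(2\log n)^{O_H(1)}$ on our range of $d$, so the coefficient powers of $p(1-p)$ are absorbed into $n^{o(1)}$, giving a bound of $n^{v(H)-1+o(1)} \le n^{v(H)-2/3}$ for large $n$). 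Note $v(H)-1$ is an overshoot of the stated $v(H)-2/3$, which is why the weaker exponent $-2/3$ is comfortable; the slack absorbs both the $n^{o(1)}$ factor and the finitely many summands.

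The main obstacle is the equality-case bookkeeping in the first part: one must verify that in the two equality cases of \cref{lem:graph-theory} the would-be leading-order contributions to $\sum_{E,E'}\mb{E}[\chi_E^{}\chi_{E'}^{}]$ are precisely matched (to within $O(n^{v(H) - \Omega(1)})$, or at least to within $O(n^{v(H)})$) by the subtracted term $\mb{E}[\gamma_H]^2 = \big(\sum_E \mb{E}[\chi_E^{}]\big)^2$. This requires using the second and third bullets of \cref{prop:graph-factor-computation} to see that when $H$ is an odd cycle the $\mb{E}[\chi_E^{}]$ terms are themselves $O(n^{-v(H)/2 - 1/4})$ so even their square times the $O(n^{v(H)})$ count of disjoint pairs is lower order, and when $H$ is an even cycle the main $2n^{-v(H)/2}$ term in $\mb{E}[\chi_E^{}]$ is exactly what makes the disjoint-copies contribution to $\mb{E}[\chi_E^{}\chi_{E'}^{}]$ cancel against $\mb{E}[\gamma_H]^2$ up to the claimed error — this is the same phenomenon that produces the $E_H$ correction in \cref{def:graph-factor}. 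Since the statement only asks for an $O(n^{v(H)})$ upper bound on the variance (not an asymptotic), one can in fact afford to be crude here and simply note that all overlay patterns contribute $O(n^{v(H)+1/4})$ while the strictly-sub-equality ones contribute $O(n^{v(H)-1/4})$, and the equality ones number only finitely many patterns each contributing $O(n^{v(H)})$ after the trivial bound — but the cleanest writeup tracks the cancellation explicitly as above.
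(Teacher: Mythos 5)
There is a genuine gap in your argument for the first claim: you invoke \cref{lem:graph-theory} for the two-element sequence $(E,E')$ with $E,E'\simeq H$, but that lemma requires each graph in the sequence to be \emph{connected with minimum degree at least $2$}, whereas the statement of \cref{lem:variance-reduction} concerns an arbitrary $H$ with no isolated vertices. For such $H$ the inequality you need is simply false: for two vertex-disjoint copies of $H=P_4$ one has $v(G_0)=8$ and $|E_{\mr{sing}}(G_0)|=6$, so $v(G_0)-\tfrac12|E_{\mr{sing}}(G_0)|=5>4=v(H)$, and your term-by-term bound for that overlay pattern is $O(n^{8-3+1/4})=O(n^{5.25})$, far above the target $O(n^{4})$. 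The subtracted product $\mb{E}[\chi_E^{}]\mb{E}[\chi_{E'}^{}]$ is no help, since the first bullet of \cref{prop:graph-factor-computation} only gives $|\mb{E}\chi_{P_4}^{}|\le Cn^{-5/4}$. More fundamentally, the reason $\mr{Var}[\gamma_H]$ is small for trees and for graphs with degree-$1$ vertices is the cancellation coming from the $d$-regularity identity $\sum_{v\ne u}\chi_{(u,v)}^{}=0$ (i.e.\ \cref{lem:degree1-reduction}), which is invisible to the absolute-value bounds of \cref{prop:graph-factor-computation}; no amount of bookkeeping over overlay patterns can recover it. This is why the paper's proof proceeds by induction on $v(H)$: it first uses \cref{lem:connected-reduction,lem:degree1-reduction} to rewrite $\gamma_H$ (on $d$-regular graphs) in terms of graph factors on fewer vertices, or in terms of products of connected minimum-degree-$2$ factors, and only then applies the moment computation from the proof of \cref{thm:main} (where \cref{lem:graph-theory} legitimately applies).

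Your second part has the same circularity: after applying \cref{lem:degree1-reduction}, the resulting graphs $H'$ may again be disconnected or have degree-$1$ vertices, so "the first part applied to each $H'$" is exactly the step that is not established; one must iterate, i.e.\ set up the induction. The parts of your argument that do work are the case of $H$ connected with minimum degree at least $2$ (where your expansion is essentially the moment computation already carried out in the proof of \cref{thm:main}) and the observation that the coefficient losses $(p(1-p))^{-O_H(1)}=n^{o(1)}$ are absorbed by the slack in the exponent $v(H)-2/3$; to repair the proof, embed these inside the paper's inductive scheme.
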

\begin{proof}
We induct on $v(H)$. Note $v(H)\le 2$ is trivial, as in fact $\gamma_H(G)$ is deterministic, so both parts of the lemma are satisfied. For $H$ being a connected graph with minimum degree at least $2$, the desired result follows immediately from the moments calculation given in the proof of \cref{thm:main}. In the remaining cases, for $G$ a $d$-regular graph we find that if $H$ has connected components $H_1,\ldots,H_k$, then $\gamma_H(G)-\prod_{i=1}^k\gamma_{H_i}(G)$ can be written as a sum of graph factors each involving coefficients bounded by $1/(p(1-p))^{O_H(1)}$ and with at most $v(H)-1$ vertices by \cref{lem:connected-reduction}. If there is a vertex of degree $1$ in $H$, and hence some $H_i$, we can apply \cref{lem:degree1-reduction} and then we obtain a sum of graph factors with at most $v(H)-1$ vertices after expanding (with similar bounds on coefficients). Thus the total variance, by induction, is $(p(1-p))^{-O_H(1)}\cdot O(n^{v(H)-1})\le n^{v(H)-2/3}$, which satisfies the desired stronger bound in the case where $H$ has a degree $1$ vertex.

Finally, if all the $H_i$ are minimum degree at least $2$, then we see that the ``lower'' portion corresponding to graph factors on at most $v(H)-1$ vertices contributes $O(n^{v(H)-2/3})$ by induction similar to before. Hence the problem reduces to understanding the variance of $\prod_{i=1}^k\gamma_{H_i}(G)$. We have
\[\mr{Var}\Big(\prod_{i=1}^k\gamma_{H_i}(G)\Big)\le\mb{E}\prod_{i=1}^k\gamma_{H_i}(G)^2\le\prod_{i=1}^k(\mb{E}\gamma_{H_i}(G)^{2k})^{1/k}.\]
Again, the moment-based proof of \cref{thm:main} gives a bound of $O(n^{v(H_1)+\cdots+v(H_k)})$ for this.
\end{proof}

\section{Deduction of Subgraph Count and Trace Count Normality}\label{sec:final}
We now consider a subgraph count $X_H$ for $G\sim\mb{G}(n,d)$ and prove the desired normality as in \cref{cor:deduction-1}. This is essentially an immediate consequence of \cref{thm:main} and expanding into the appropriate graph factors. The precise nature of the contributing terms however depends in an intricate manner on the precise structure of $H$.

\begin{proof}[Proof of \cref{cor:deduction-1}]
Let $H$ be a connected graph at least $2$ vertices which is not a star. For $G\sim\mb{G}(n,d)$ write
\[W = X_H = \sum_{\substack{H'\subseteq K_n\\H'\simeq H}}\prod_{e\in E(H')}x_e.\]
Letting $\chi_e^{} = (x_e-p)/\sqrt{p(1-p)}$ as usual, we find that
\begin{equation}\label{eq:W-subgraph-count}
W=\sum_{\substack{H'\subseteq K_n\\H'\simeq H}}\prod_{e\in E(H')}(p+\sqrt{p(1-p)}\chi_e^{})=\sum_{S\subseteq H}p^{e(H)-e(S)}(\sqrt{p(1-p)})^{e(S)}c_{S,H}d_{S,H}\binom{n-v(S)}{v(H)-v(S)}\gamma_S(\mbf{x}),
\end{equation}
where $c_{S,H}=(v(H)-v(S))!\mr{aut}(S)/\mr{aut}(H)$, $d_{S,H}=N(H,S)$ (the number of times $S$ appears as a subgraph of $H$), and the sum is over subgraphs $S$ (lacking isolated vertices) of $H$ up to isomorphism. For the empty graph, we have $c_{\emptyset,H}=v(H)!/\mr{aut}H$ and $d_{\emptyset,H}=1$.

Notice that the graph factors $\gamma_S$ with $e(S)\le 2$ (the empty graph, an edge, a star with two edges, and two disjoint edges) are deterministic since $G$ is a $d$-regular graph. If $H$ contains a $C_3$ notice that all other graph factors $\gamma_S$ in the expansion have $v(S)\ge 4$ hence the corresponding terms have variance bounded by $O(n^{2(v(H)-v(S))}\cdot n^{v(S)})=O(n^{2v(H)-4})$ by \cref{lem:variance-reduction} while the $\gamma_{C_3}$ term has variance
\[\Bigg(\frac{6(v(H)-3)!}{\mr{aut}(H)}N(H,C_3)p^{e(H)-3/2}(1-p)^{3/2}\binom{n-3}{v(H)-3}\Bigg)^2\mr{Var}[\gamma_{C_3}].\]
Since the variance determination in \cref{thm:main} allows us to compute $\mr{Var}[\gamma_{C_3}]=(1+O(n^{-1/6}))n^3/6$, we easily obtain the first bullet point of \cref{cor:deduction-1}: we can write $W=X+Y$ where $X$ is the term coming from $\gamma_{C_3}$ and $Y$ is the rest. We have that $\mr{Var}[Y]=O(n^{-1}\mr{Var}[X])$. Thus since $X$ satisfies a central limit theorem, so does $X+Y$. Furthermore, the variance can be written
\[\mr{Var}[X+Y]=\mr{Var}[X]+\mr{Var}[Y]+2\mb{E}(X-\mb{E}X)(Y-\mb{E}Y)\]
and $|\mb{E}(X-\mb{E}X)(Y-\mb{E}Y)|\le\sqrt{\mr{Var}[X]\mr{Var}[Y]}$ by Cauchy--Schwarz, which gives an appropriate bound for the change in the variance going from $X$ to $X+Y$. In particular, $\mr{Var}[X+Y]=(1+O(n^{-1/2}))\mr{Var}[X]$.

Next suppose that $H$ contains a $C_4$ but no $C_3$. Then all potential contributing graph factors which are not deterministic are on at least $4$ vertices. Notice that any graph factor $\gamma_S$ with $v(S)\ge 5$ has corresponding variance at most $O(n^{2v(H)-5})$ by \cref{lem:variance-reduction}. Furthermore for $v(S)=4$, notice that if some vertex has degree $1$ then by \cref{lem:variance-reduction} we obtain corresponding variance of order $O(n^{2v(H)-4-2/3})$. All remaining $S$ must have $4$ vertices, minimum degree at least $2$, and contain no $C_3$, so $S=C_4$. The variance of the $\gamma_{C_4}$ term is
\[\Bigg(\frac{8(v(H)-4)!}{\mr{aut}(H)}N(H,C_4)p^{e(H)-2}(1-p)^2\binom{n-4}{v(H)-4}\Bigg)^2\mr{Var}[\gamma_{C_4}].\]
The second bullet point of \cref{cor:deduction-1} follows similar to before.

The last case is when $H$ contains neither $C_3$ nor $C_4$. Since $H$ is not a star, $H$ contains a $P_4$, i.e., a path on $4$ vertices. First note that graph factors $\gamma_S$ with $v(S)\ge 6$ have corresponding variance $O(n^{2v(H)-6})$ and graph factors $\gamma_S$ with $v(S)=5$ and some vertex of degree $1$ have corresponding variance $O(n^{2v(H)-5-2/3})$ by \cref{lem:variance-reduction}. Furthermore, since $H$ has no $C_3$ and no $C_4$, we see that the only possible $S$ with $v(S)\le 4$ for which $\gamma_S$ is not deterministic is $S=P_4$. Also, the possible $S$ with $v(S)=5$ are those with minimum degree at least $2$ and no $C_3$ and no $C_4$, which is easily seen to force $S=C_5$. The variance of the $\gamma_{C_5}$ term is
\begin{align*}
\Bigg(\frac{10(v(H)-5)!}{\mr{aut}(H)}&N(H,C_5)p^{e(H)-5/2}(1-p)^{5/2}\binom{n-5}{v(H)-5}\Bigg)^2\mr{Var}[\gamma_{C_5}]\\
&= (1+O(n^{-1/6}))\frac{10N(H,C_5)^2}{\mr{aut}(H)^2}p^{2e(H)-5}(1-p)^5n^{2v(H)-5}
\end{align*}
by \cref{thm:main}. The graph factor $\gamma_{P_4}$ is more delicate, as we must use the observation in \cref{lem:degree1-reduction} that we can reduce its complexity using $\sum_{v\neq u}\chi_{(v,u)}^{}=0$ for all fixed $u$. We obtain
\begin{align*}
\gamma_{P_4}(\mbf{x}) &= \frac{1}{2}\sum_{u,v,w}\bigg(\chi_{(u,v)}^{}\chi_{(v,w)}^{}\sum_{u'\neq u,v,w}\chi_{(w,u')}^{}\bigg) = \frac{1}{2}\sum_{u,v,w}\chi_{(u,v)}^{}\chi_{(v,w)}^{}(-\chi_{(w,u)}^{}-\chi_{(w,v)}^{})\\
&= -\frac{1}{2}\sum_{u,v,w}(\chi_{(u,v)}^{}\chi_{(v,w)}^{}\chi_{(w,u)}^{}+\chi_{(u,v)}^{}\chi_{(v,w)}^2)\\
&=-3\gamma_{C_3}-\frac{1}{2}\sum_{u,v,w}\chi_{(u,v)}^{}\bigg(1-\frac{(2p-1)\chi_{(v,w)}^{}}{\sqrt{p(1-p)}}\bigg),
\end{align*}
where the sum is over tuples of distinct $u,v,w\in[n]$. Here we have used that $\chi_e^2=1-(2p-1)\chi_e^{}/\sqrt{p(1-p)}$ and given that $\gamma_{K_2},\gamma_{P_3}$ are deterministic, we find that $\gamma_{P_4}+3\gamma_{C_3}$ is deterministic. This can alternatively be deduced by noting that $X_{P_4} + 3X_{C_3}$ is a deterministic function in a $d$-regular graph\footnote{We thank the referee for this remark, which provides a check for the formulas in \cref{cor:deduction-1} since we must have $\mr{Var}[X_{P_4}] = 9\mr{Var}[X_{C_3}].$}. Therefore the variance of the $\gamma_{P_4}$ term is
\begin{align*}
\bigg(\frac{2(v(H)-4)!}{\mr{aut}(H)}&N(H,P_4)p^{e(H)-3/2}(1-p)^{3/2}\binom{n-4}{v(H)-4}\bigg)^2\cdot 9\mr{Var}[\gamma_{C_3}]\\
&= (1+O(n^{-1/6}))\frac{6N(H,P_4)^2}{\mr{aut}(H)^2}p^{2e(H)-3}(1-p)^3n^{2v(H)-5}.
\end{align*}
Finally, writing $X_1$ for the $\gamma_{C_5}$ term and $X_2$ for the $\gamma_{P_4}$ term, using the moment computations in the proof of \cref{thm:main} (applied to $C_3$ and $C_5$) we easily find that $\mr{Cov}(X_1,X_2)=O(n^{2v(H)-5-1/6})$. (Or, we can directly see this from the joint central limit theorem satisfied by $\gamma_{C_3},\gamma_{C_5}$ in \cref{thm:main}.) The third bullet of \cref{cor:deduction-1} follows similar to before.
\end{proof}

Finally, we prove \cref{cor:deduction-2}. Again, this is mostly rearranging terms in order to apply \cref{thm:main}. Our analysis is more complicated than typical trace expansion arguments as one cannot trivially rule out walks where an edge appears with multiplicity $1$ in various expectation computations. To perform the necessary analysis, we will need the following modified version of \cref{lem:graph-theory} which allows for some of the $H_i$ to be a doubled edge (but we may otherwise restrict to cycles). As a consequence, the equality case is more complicated. Recall that for a multigraph $G$, $E_\mr{sing}(G)$ is the set of edges of multiplicity $1$.
\begin{lemma}\label{lem:graph-theory-2}
Let $\mc{H} = (H_i)_{1\le i\le k}$ be a sequence of cycles or multigraphs consisting of a doubled edge. Consider overlaying the $H_i$ in order to obtain a multigraph $G$. Let $E_\mr{sing}=E_\mr{sing}(G)$. Suppose that every connected component of $G$ contains at least one participating cycle of $\mc{H}$. Then we have
\[v(G)-\frac{1}{2}|E_\mr{sing}|\le\frac{1}{2}\sum_{i=1}^kv(H_i)=\frac{e(G)}{2}\]
with equality only if every connected component of $G$ can be obtained by first taking a cycle of $\mc{H}$ or perfectly overlaying two cycles of $\mc{H}$, and second attaching pendant trees of doubled edges from $\mc{H}$ (in particular, removing the cycle portion leaves a forest of doubled edges). Here $e(G)$ is computed with multiplicity.
\end{lemma}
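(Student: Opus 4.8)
The plan is to reduce to a single connected component of $G$ and then run a short counting argument on the underlying simple graph, handling the inequality and the equality characterization in turn.

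First I would observe, as in the proof of \cref{lem:graph-theory}, that each $H_i$ is connected and hence lies in one connected component of $G$, so it suffices to prove both the inequality and the equality characterization componentwise; I then fix a component, which by hypothesis contains a participating cycle $C_k$ with $k\ge 3$. Let $\bar G$ be the underlying simple graph of $G$ (all parallel edges identified), and for $e\in E(\bar G)$ let $m(e)\ge 1$ be its multiplicity in $G$. The two bookkeeping facts I would set up are: $\sum_i v(H_i)=e(G)=\sum_{e\in E(\bar G)}m(e)$, which holds because every cycle and every doubled edge $H$ satisfies $v(H)=e(H)$ (this is also exactly the asserted identity $\tfrac12\sum_i v(H_i)=e(G)/2$); and $e(\bar G)\ge v(\bar G)$, valid because $\bar G$ is connected and contains a cycle coming from $C_k$. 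This last point is precisely where the cycle hypothesis is needed — without it a component could be a tree of doubled edges, for which $v(G)-\tfrac12|E_{\mr{sing}}|=v(G)>e(G)/2$.

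For the inequality itself I would split $E(\bar G)=E_{\mr{sing}}\sqcup E_{\ge 2}$, where $E_{\ge 2}$ is the set of edges of multiplicity at least $2$, and estimate
\[\frac{e(G)}{2}+\frac{|E_{\mr{sing}}|}{2}=|E_{\mr{sing}}|+\frac12\sum_{e\in E_{\ge 2}}m(e)\ \ge\ |E_{\mr{sing}}|+|E_{\ge 2}|=e(\bar G)\ \ge\ v(\bar G)=v(G),\]
which rearranges to $v(G)-\tfrac12|E_{\mr{sing}}|\le e(G)/2$, as desired.

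The step I expect to be the real obstacle is the equality case, which requires the structural analysis. Equality in the display forces (a) $e(\bar G)=v(\bar G)$, so $\bar G$ is unicyclic with a unique cycle $Z$ (of length $\ge 3$), and (b) $m(e)=2$ for every $e\in E_{\ge 2}$, i.e.\ no edge of $G$ has multiplicity $\ge 3$. The key structural observation I would exploit is that a unicyclic graph has exactly one cycle, namely $Z$; therefore every participating cycle $H_i$, being a cycle of $\bar G$, must coincide with $Z$ as a subgraph, so all participating cycles are perfectly overlaid copies of $Z$. Since each contributes $1$ to the multiplicity of every edge of $Z$, condition (b) allows at most two of them, while the hypothesis forces at least one. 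A doubled-edge $H_i$ placed on an edge of $Z$ would raise that edge's multiplicity to at least $3$, contradicting (b), so every doubled edge of $\mc{H}$ lies in $\bar G\setminus Z$, which is a forest of trees pendant to $Z$; and each edge of $\bar G\setminus Z$ is covered only by doubled edges and, by (b), by exactly one, so these pendant trees consist entirely of doubled edges of $\mc{H}$. This is precisely the claimed structure, and the converse — that taking one (or two overlaid) copies of $Z$ and attaching pendant trees of doubled edges yields equality — is a direct check. The delicate points are confirming that the cycle-type pieces cannot escape $Z$ (immediate from unicyclicity) and tracking how doubled edges may interact with the cycle, where condition (b) does all the work.
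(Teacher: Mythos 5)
Your proof is correct, and it takes a genuinely different route from the paper's. The paper first overlays only the cycles of $\mc{H}$, invokes \cref{lem:graph-theory} on that sub-overlay, and then adds the doubled edges one at a time in an order that keeps each new doubled edge attached to the growing graph, checking that each addition increases $v(G_i)-\tfrac12|E_{\mr{sing}}^{(i)}|$ by at most $1$ with equality exactly when the doubled edge is pendant; the "single cycle or two perfectly overlaid cycles" part of the equality case is inherited from \cref{lem:graph-theory}. You instead work directly on the underlying simple graph $\bar G$: the chain $\tfrac{e(G)}{2}+\tfrac{|E_{\mr{sing}}|}{2}=|E_{\mr{sing}}|+\tfrac12\sum_{e\in E_{\ge 2}}m(e)\ge e(\bar G)\ge v(\bar G)$ gives the inequality in one line, and equality forces $m(e)\le 2$ everywhere together with unicyclicity of $\bar G$, from which the structure falls out: the unique cycle $Z$ absorbs all participating cycles (at most two by the multiplicity cap, at least one by hypothesis), and the multiplicity cap confines the doubled edges to the pendant forest $\bar G\setminus Z$, each edge of which is covered by exactly one doubled edge. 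Your argument is self-contained (it does not reuse \cref{lem:graph-theory}) and avoids the incremental ordering device, at the cost of not recycling the earlier lemma; the identification of where the cycles must sit is if anything more transparent in your version, since it comes straight from unicyclicity rather than from tracking the equality case through the induction. Both your correct remark that the cycle hypothesis is exactly what rules out components that are trees of doubled edges, and your observation that the converse direction is a routine check (and not actually asserted by the lemma), are consistent with the paper.
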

\begin{remark}
We note that for any $H_i$ which is a doubled edge, the corresponding multiedges in $G$ are not contained in $E_\mr{sing}$.
\end{remark}
\begin{proof}
Without loss of generality we may assume $G$ is connected, as this clearly preserves the inequality as well as equality cases. Also, the equality $\sum_{i=1}^kv(H_i)=e(G)$ is trivial since cycles and doubled edges have the same edge and vertex counts. Now let $H_1,\ldots,H_{k'}$ be the cycles and the rest the doubled edges. Let $G'$ be the multigraph overlay of $H_1,\ldots,H_{k'}$ and define $E_\mr{sing}'=E_\mr{sing}(G')$. These are the edges contained in a single $H_i$ for $i\in[k']$. By \cref{lem:graph-theory}, we have
\[v(G')-\frac{1}{2}|E_\mr{sing}'|\le\frac{1}{2}\sum_{i=1}^{k'}v(H_i)\]
and equality can only occur if every connected component of $G'$ is either a single cycle $H_i$ for $i\in[k']$ or an overlay of $2$ equal cycles $H_i,H_{i'}$ for distinct $i,i'\in[k']$. Furthermore, by initial assumption $k'\ge 1$ so $G'$ is nonempty.

Now consider adding in the doubled edges in a specified order, starting at $G_{k'}=G'$ and ending at $G_k=G$. We choose the order as follows: at time $k'\le i\le k-1$, once we have $G_i$, since we know $G$ is connected there must be a doubled edge to add which shares a vertex with $G_i$; add one of those edges. Define $E_\mr{sing}^{(i)}$ in the obvious way. We see that
\[(v(G_{i+1})-v(G_i))-\frac{1}{2}(|E_{\mr{sing}}^{(i+1)}|-|E_\mr{sing}^{(i)}|)\le 1=\frac{1}{2}v(H_{i+1})\]
since either we add $0$ vertices and at worst reduce the number of singleton edges by $1$, or we add $1$ vertex and thus leave the number of singleton edges unchanged (here we are using that $G'$ is nonempty and the connected components of $G_i$ each contain a cycle, otherwise it could be possible to add $2$ vertices at the beginning). Equality occurs here only if we add $1$ new pendant vertex.

Adding these inequalities over all $i$, we obtain the desired inequality. Furthermore, equality can only occur if the connected components of $G'$ are single or doubled cycles, and then we only add pendant trees of doubled edges. But since the final multigraph $G$ is connected, this means we must have started with at most one component as we cannot bridge between connected components of some $G_i$ using a doubled edge while simultaneously increasing the vertex count by $1$. The result follows.
\end{proof}

Finally, we demonstrate \cref{cor:deduction-2}. 

\begin{proof}[Proof of \cref{cor:deduction-2}]
Notice that deterministically we have that the all $1$ vector is an eigenvector with eigenvalue $d$. Therefore we have that $M:=A_G-pJ+pI$ (where $J$ is the all $1$ matrix) has eigenvalues $\lambda_i + p$ for $2\le i\le n$ and one eigenvalue of $0$. 

In order to prove \cref{cor:deduction-2} it suffices to prove that if $E_\ell^\ast = \mb{E}\on{tr}(M^{\ell})$, $\sigma_\ell^{\ast2} = \mr{Var}[\on{tr}(M^\ell)]$ then 
\[\big(\sigma_\ell^{\ast-1/2}\big(\on{tr}(M^\ell)-E_\ell^\ast\big)\big)_{3\le \ell\le k}\xrightarrow[]{d.}\mc{N}(0,\Sigma_k)\]
and $\sigma_\ell^\ast = \Theta((p(1-p)n)^{\ell/2})$ for fixed $\ell\ge 3$. To see that this implies the desired result note that each term of $\sum_{i=2}^n(\lambda_i + p)^\ell-\sum_{i=2}^n\lambda_i^\ell$ can be represented as a degree at most $\ell-1$ polynomial in $\lambda_i+p$ with coefficients bounded by $O_{k}(p^{O_{k}(1)})$. These terms are lower order due to the order of the variance (and using that the first two moments of the eigenvalues are deterministic).

Given $\ell\ge 3$, note that
\[\on{tr}((A_G-pJ+pI)^\ell/(p(1-p))^{\ell/2}) = \sum_{u_1,\ldots,u_\ell\in[n]}\prod_{i=1}^\ell\chi_{(u_i,u_{i+1})}^{}\]
where we define $\chi_{(u,u)}^{}=0$ and take indices modulo $\ell$. The sum is over closed walks of length $\ell$.

Consider the closed walk $u_1,\ldots,u_\ell$. As $\chi_{(u,u)}^{} = 0$, we have that the walk has no self-loops corresponding to $u_{t+1}=u_t$. The edges traced out thus form a multigraph when superimposed. Let $\mc{W}_\ell$ be the collection of possible isomorphism types of multigraphs and for $(u,v)\in G$ and $G\in\mc{W}_\ell$ let $G(u,v)$ be the multiplicity of $(u,v)$ in $G$. We see
\[\on{tr}((A_G-pJ+pI)^\ell/(p(1-p))^{\ell/2}) = \sum_{G\in\mc{W}_\ell}c_G\bigg(\sum_{\substack{V(G')\subseteq V(K_n)\\G'\simeq G}}\prod_{(u,v)\in G}\chi_{(u,v)}^{G(u,v)}\bigg),\]
were $c_G$ is the number of choices of vertices in $G$ and closed walks of length $\ell$ starting at that vertex and traversing each edge $(u,v)\in G$ in either direction exactly $G(u,v)$ times. If $G$ is a simple graph, the term on the inside is just $\gamma_G(\mbf{x})$. We therefore abusively define \[\gamma_G(\mbf{x})=\sum_{\substack{V(G')\subseteq V(K_n)\\G'\simeq G}}\prod_{(u,v)\in G}\chi_{(u,v)}^{G(u,v)}\]
for multigraphs $G$ without isolated vertices. However, we will later use $\chi_e^2=1-(2p-1)\chi_e^{}/\sqrt{p(1-p)}$ and similar relations for higher powers to reduce to a linear combination of graph factors $\gamma_F$.

Furthermore, every multigraph in $\mc{W}_\ell$ can be decomposed (with multiplicity preserved) into a collection of cycles and doubled edges: move along the walk until the first vertex repetition, then remove a portion corresponding to a doubled edge or cycle, and keep doing this. We can further further turn the doubled edges into a multitree by iteratively removing cycles of doubled edges and turning them into two cycles. Given $G\in\mc{W}_\ell$, let $\mc{H}_G$ be the sequence of multigraphs thus generated. Let $\mc{T}_\ell$ be the collection of $G\in\mc{W}_\ell$ that are composed only of doubled edges, which therefore compose a tree as $G$ is connected.

First consider $G\in\mc{W}_\ell\setminus\mc{T}_\ell$, so that $\mc{H}_G$ contains at least one cycle. Let $\mc{W}_G'$ be all possible isomorphism classes $G_1\cup G_2$ for the multigraph union of two copies $G_1,G_2\simeq G$. We see
\[\mr{Var}[\gamma_G]\le\mb{E}\gamma_G^2\lesssim\sum_{G'\in\mc{W}_G'}n^{v(G')}\cdot n^{-|E_\mr{sing}(G')|/2+1/3}\]
by expansion and \cref{prop:graph-factor-computation}. Now consider the collection of cycles and doubled edges which make up $G'$. Since they are overlaid in a way that form two copies of $G$, our condition on $\mc{H}_G$ implies that every connected component of $G'$ has at least one cycle participating in its creation. Thus \cref{lem:graph-theory-2} applies. For cases where equality does not hold, we have $v(G')-|E_\mr{sing}(G')|/2<\ell$ since $e(G')=2\ell$. This implies $v(G')-|E_\mr{sing}(G')|/2+1/3\le\ell-1/6$. For cases where equality does hold, by \cref{lem:graph-theory-2} every connected component of $G'$ must consist of a cycle or doubled cycle (which come from our specified list of cycles that create $G_1,G_2$) and then pendant trees of doubled edges. Note that $G_1,G_2$ are each connected, so we either have that these are disjoint and of this form or they are connected and form such a graph. In the former case $G$ is clearly either a cycle or doubled cycle with pendant trees of doubled edges. In the latter case we easily deduce that $G$ is a single cycle with pendant trees of doubled edges (recalling $G\notin\mc{T}_\ell$). Let $\mc{C}_\ell$ be the set of isomorphism classes of these more special forms, so that $\mr{Var}[\gamma_G]=O(n^{\ell-1/6})$ for $G\in\mc{W}_\ell\setminus(\mc{T}_\ell\cup\mc{C}_\ell)$.

Next we study $G\in\mc{T}_\ell$, in which case $\ell$ must be even (thus $\ell\ge 4$) and $G$ has $\ell/2$ doubled edges and at most $\ell/2+1$ vertices (being a multitree). We write
\[\gamma_G(\mbf{x})=\sum_{F\subseteq G}c_{F,G}(p)\bigg(\sum_{\substack{F'\subseteq K_n\\F'\simeq F}}\prod_{(u,v)\in F'}\chi_{(u,v)}^{}\bigg)\]
where the sum is over graphs $F$ up to isomorphism obtained by either including $1$ or $0$ edges for each edge in $G$ (without multiplicity). Here $c_{F,G}(p)$ are appropriately computed constants. This is shown by expanding via $\chi_e^2=1-(2p-1)\chi_e^{}/\sqrt{p(1-p)}$ and similar for higher powers, and collecting the patterns that can result. Note also that $F$ may have isolated vertices, and that it is a forest since $G$ is a multitree. Regardless of these isolated vertices, let us abusively denote the inside term as $\gamma_F(\mbf{x})$ (this agrees with the usual definition). If $e(F)\le 2$ then the term corresponding to $F$ is deterministic since we are considering $d$-regular graphs. Hence we may restrict to just terms with $e(F)\ge 3$. We have for such $F$ that
\[\mr{Var}[\gamma_F]\le\mb{E}\gamma_F^2\lesssim\max_{v\ge 0}n^{2(\ell/2+1)-v}\cdot n^{-(2e(F)-\max(v-1,0))/2+1/3} = \max_{v\ge 0}n^{\ell+2-e(F)+\max(-v-1,-2v)/2+1/3}\]
by \cref{prop:graph-factor-computation}: two copies of $F$ with $v$ overlapping vertices can share at most $\max(v-1,0)$ edges. This clearly yields $\mr{Var}[\gamma_F]=O(n^{\ell-2/3})$, and thus we find $\mr{Var}[\gamma_G]=O(n^{\ell-1/6})$ for $G\in\mc{T}_\ell$.

Finally, consider $G\in\mc{C}_\ell$. In $\gamma_G(\mbf{x})$, the highest degree of any term $\chi_e^{}$ is $2$. Using $\chi_e^2=1-(2p-1)\chi_e^{}/\sqrt{p(1-p)}$ and expanding out, it is easy to see similar to above that the sum of the terms involving any $-(2p-1)\chi_e^{}/\sqrt{p(1-p)}$ in the expansion, call this $\gamma_G'$, has total variance bounded by $O(n^{\ell-1/6})$. Finally, if $G$ is a doubled cycle with pendant trees of double edges then the remaining term is deterministic, while if $G$ is a single cycle with such pendant trees then the remaining term is a cycle of say length $r$ with $(\ell-r)/2$ isolated vertices. Finally, recall that the variance of $\gamma_{C_r}$ is $O(n^r)$ by \cref{thm:main}.

Overall, combining all this information and noting that a $\gamma_{C_\ell}$ term only comes from a walk that repeats no vertices, we see
\[\on{tr}((A_G-pJ+pI)^\ell/(p(1-p))^{\ell/2}) = 2\ell\gamma_{C_\ell}(\mbf{x})+\sum_{\substack{3\le r<\ell\\r\equiv\ell\pmod{2}}}\alpha_{\ell,r}n^{(\ell-r)/2}\gamma_{C_r}(\mbf{x}) + X_\ell\]
for some random variable $X_\ell$ satisfying $\mr{Var}[X_\ell]=O(n^{\ell-1/6})$ and for appropriate combinatorially definable rational numbers $\alpha_{\ell,r}$ independent of $n,p$.

Equivalently,
\[\on{tr}((A_G-pJ+pI)^\ell/(p(1-p)n)^{\ell/2})=2\ell(n^{-\ell/2}\gamma_{C_\ell})+\sum_{\substack{3\le r<\ell\\r\equiv\ell\pmod{2}}}\alpha_{\ell,r,n}(n^{-r/2}\gamma_{C_r}) + n^{-\ell/2}X_\ell\]
and now the result clearly follows from \cref{thm:main} as the error terms $X_\ell$ are negligible (using a similar argument as in the proof of \cref{cor:deduction-1}). Since this representation in terms of the cycle graph factors is triangular, we furthermore see that the resulting $\Sigma_k$ that arises in the limit is indeed positive definite; here we are using that the coefficient of $\gamma_{C_{\ell}}$ is a strictly positive constant, that the $\alpha_{\ell,r}$ are independent of $p,n$ and of bounded size in terms of $\ell$, and that the graph factors corresponding to cycles are jointly independently normally distributed by \cref{thm:main}.
\end{proof}

\bibliographystyle{amsplain0-full.bst}
\bibliography{main.bib}

%\appendix

\end{document}